\documentclass[notitlepage,reqno,12pt]{amsart}

\pdfoutput=1

\usepackage{amsmath}
\usepackage{amssymb,amsfonts,amsthm}
\usepackage[foot]{amsaddr}
\usepackage{graphicx}
\usepackage{bm}
\usepackage{subfigure}
\usepackage[square,numbers,sort&compress]{natbib}
\usepackage{hyperref}
\usepackage[letterpaper,hmarginratio=1:1]{geometry}
\graphicspath{{figs/}}
\usepackage{scalerel,stackengine}
\usepackage{dsfont}
\usepackage{tikz}
\usetikzlibrary{matrix}
\usepackage{thmtools}
\usepackage{thm-restate}
\usepackage[capitalize]{cleveref}
\usepackage{blkarray, bigstrut}
\usepackage{booktabs}
\usepackage{colortbl}
\usepackage[normalem]{ulem}  

\numberwithin{equation}{section}

\theoremstyle{definition}
\newtheorem{theorem}{Theorem}
\newtheorem{lemma}{Lemma}
\newtheorem{corollary}{Corollary}
\newtheorem*{remark}{Remark}  
\declaretheorem[name=Example,qed={\lower-0.3ex\hbox{$\triangle$}}]{Example}
\newtheorem{definition}{Definition}
\newtheorem{property}{Property}

\crefname{property}{Property}{Properties}
\Crefname{property}{Property}{Properties}


\newcommand{\mathnotation}[2]{\newcommand{#1}{\ensuremath{#2}}}

\newcommand{\efrac}[2]{#1 / #2}             

\newcommand{\jlt}[1]{}
\newcommand{\bwo}[1]{}
\stackMath
\newcommand\reallywidehat[1]{%
\savestack{\tmpbox}{\stretchto{%
		\scaleto{%
			\scalerel*[\widthof{\ensuremath{#1}}]{\kern-.6pt\bigwedge\kern-.6pt}%
			{\rule[-\textheight/2]{1ex}{\textheight}}
		}{\textheight}%
	}{0.5ex}}%
\stackon[1pt]{#1}{\tmpbox}%
}
\hypersetup{
colorlinks,
linkcolor={red!50!black},
citecolor={blue!50!black},
urlcolor={blue!80!black}
}

\DeclareMathOperator{\sgn}{sgn}
\newcommand\x{\times}

\newcommand{\ra}[1]{\renewcommand{\arraystretch}{#1}}

\newcommand{\proofsubpart}[1]{\medskip\noindent\textsl{#1:}}
\newcommand{\st}{\,\vert\,}

%
%

\let\originalleft\left
\let\originalright\right
\renewcommand{\left}{\mathopen{}\mathclose\bgroup\originalleft}
\renewcommand{\right}{\aftergroup\egroup\originalright}
\renewcommand{\l}{\left}
\renewcommand{\r}{\right}
\mathnotation{\pd}{\partial}
\mathnotation{\ldef}{\mathrel{\raisebox{.069ex}{:}\!\!=}}
\mathnotation{\rdef}{\mathrel{=\!\!\raisebox{.069ex}{:}}}
\mathnotation{\dint}{\,{\mathrm{d}}}        
\mathnotation{\ee}{\mathrm{e}}              
\mathnotation{\imi}{\mathrm{i}}             

\mathnotation{\Sobo}{\dot{H}}             
\newcommand{\norm}[1]{\l\lVert#1\r\rVert}    

\newcommand{\pair}[2]{\savg{#1\,,\,#2}}
\newcommand{\savg}[1]{\l\langle #1\r\rangle} 
\mathnotation{\xc}{x}               
\mathnotation{\xv}{\bm{\xc}}        
\mathnotation{\fw}{f}               
\mathnotation{\Lone}{L^1}           
\mathnotation{\Ltwo}{L^2}           
\mathnotation{\qq}{q}               
\mathnotation{\kc}{k}               
\mathnotation{\kv}{{\bm{\kc}}}      
\mathnotation{\km}{\kc}             
\mathnotation{\lv}{{\bm{\lc}}}      
\mathnotation{\lc}{{\ell}}           
\mathnotation{\Lsc}{L}               
\mathnotation{\Cu}{c}               
\mathnotation{\NK}{K}               
\mathnotation{\gt}{g}               
\mathnotation{\T}{T}                
\mathnotation{\sdim}{d}             
\mathnotation{\Vol}{\mathbb{T}^\sdim}         
\newcommand{\aaa}{a}					
\newcommand{\bbb}{b}					
\newcommand{\rec}{$q$-recurrent}        		   	 
\newcommand{\tran}{$q$-transient}       			 
\newcommand{\unif}{r}        					 
\newcommand{\indiv}{\lambda}        				 
\newcommand{\indivBigOh}{\varrho}        				 
\newcommand{\Hq}{\Sobo^\qq}    						
\newcommand{\mixnorm}[1]{\l\lVert#1\r\rVert_{\Sobo^{-\qq}}}   		
\newcommand{\typeone}{$(q,\rate)$-recurrent}        
\newcommand{\typetwo}{$(q,\rate)$-transient}        
\newcommand{\rate}{h}       			 
\newcommand{\constant}{c}       			 
\newcommand{\ratio}[1]{\l( \frac{\rate(T_{#1})}{\mixnorm{f^{T_{#1}}}}	\r)^2}       			 
\newcommand{\BigOh}[1]{\mathcal{O}\l( #1 \r)}       
\newcommand{\littleOh}[1]{o\l( #1 \r)}     		 
\newcommand{\BigOhText}{Big-O}  					 
\newcommand{\littleOhText}{Little-O}       

\begin{document}

\title[On mix-norms and the rate of decay of correlations]{On mix-norms and the rate of \\ decay of correlations}

\author{Bryan W. Oakley$^1$}
\author{Jean-Luc Thiffeault$^1$}
\address{$^1$Department of Mathematics, University of Wisconsin --
	Madison, WI 53706, USA}
\author{Charles R. Doering$^2$}
\address{$^2$Center for the Study of Complex Systems, Department of Mathematics
	and Department of Physics, University of Michigan, Ann Arbor, MI 48109, USA}
\email{\href{mailto:boakley@wisc.edu}{boakley@wisc.edu}}
\email{\href{mailto:jeanluc@math.wisc.edu}{jeanluc@math.wisc.edu}}
\email{\href{mailto:doering@umich.edu}{doering@umich.edu}}

\date{\today}

\begin{abstract}
Two quantitative notions of mixing are the decay of correlations and the decay of a mix-norm --- a negative Sobolev norm --- and the intensity of mixing can be measured by the rates of decay of these quantities.
From duality, correlations are uniformly dominated by a mix-norm; but can they decay asymptotically faster than the mix-norm?
We answer this question by constructing an observable with correlation that comes arbitrarily close to achieving the decay rate of the mix-norm.
Therefore the mix-norm is the sharpest rate of decay of correlations in both the uniform sense and the asymptotic sense.
Moreover, there exists an observable with correlation that decays at the same rate as the mix-norm if and only if the rate of decay of the mix-norm is achieved by its projection onto low-frequency Fourier modes.
In this case, the function being mixed is called \emph{\rec}; otherwise it is \emph{\tran}.
We use this classification to study several examples and raise questions for future investigations.
\end{abstract}

\maketitle


\section{Introduction}

Consider a spatially-periodic mean-zero function $f^t(\xv) = f(t, \xv)$ bounded uniformly in $L^2(\Vol)$ for all $t>0$.
For example, $f(t, \xv)$ might be a solution to the advection-diffusion equation
\begin{equation}\label{advDiffEqn}
  \frac{\pd f}{\pd t} + \bm{u} \cdot \nabla f = D \Delta f,
\end{equation}
with~$f^0 \in L^2(\Vol)$ and smooth divergence-free velocity field~$\bm{u}(t,\bm{x})$.
We may also consider~$D=0$ in~\cref{advDiffEqn}, in which case it is the transport equation.
Another example, in the context of dynamical systems, is when an initial condition $f^0 \in L^2$ is transported by an area-preserving map $M$ via the transfer operator $f^{n+1}= f^n \circ M^{-1}$.

Decay of the correlation function $C_t(g) = \left| \pair{f^t}{g} \right|$  as $t \to \infty$ for observables $g$ in $L^2(\Vol)$ corresponds to mixing of $f^t$ as $t \to \infty$ \cite{SOW}.
\citet{Mathew2005} introduced the $H^{-1/2}$ norm as another criterion to quantify mixing, and \citet{Lin2011b} extended this to any negative Sobolev (e.g., $H^{-q}$) norm and showed that correlations decay to zero if and only if any such ``mix-norm'' decays to zero.
That is,
\begin{equation*}
	\lim_{t \to \infty} \pair{f^t}{g} = 0  \quad \forall g \in L^2 \iff \lim_{t \to \infty} \norm{f^t}_{H^{-q}} = 0 \text{, for any } q>0.
\end{equation*}

Mix-norms are well-suited to quantification of mixing efficiencies \cite{ DoeringThiffeault2006, Lin2010, Shaw2006, Thiffeault2012, Thiffeault2011, Thiffeault2004, Marcotte2018b, Vermach2018}, to lower bounds on the rate of mixing in general \cite{Iyer2014, Lunasin2012, Lunasin2012_erratum}, and to analyzing mixing \cite{ Mathew2003, Mathew2007, Miles2018, Yao2017}.
Moreover, such negative Sobolev spaces provide a natural setting for a discussion of enhanced dissipation and relaxation \cite{Bedrossian_preprint_2,Bedrossian_preprint_3,Constantin2008, CotiZelati2019_preprint, CotiZelati2018, Feng2019, Kiselev2016, Kiselev2008}.
\citet{Mathew2005} introduced the mix-norm in the context of spatial averages over strips, and made the connection to weak convergence (see also \cite{Zillinger2019}).

While mix-norms are well-adapted to the PDE context, correlations and weak convergence are more commonly studied in the context of ergodic theory.
A central question, then, is the quantitative relationship between decay rate of correlations and and decay of mix-norms.
This is the central focus of this paper where we will work in a setting where the evolution of a function $f^t(\xv)$ is given, arising from the continuous-time solution of a PDE or in discrete times from an iterated map.

When studying a collection of functions converging to zero at $t \rightarrow \infty$, such as $\left| \pair{f^t}{g} \right| $ for $g \in X$ where $X \subset L^2$ is some Banach space, there are several reasonable ways to define a rate of decay:
\smallskip
\begin{enumerate}
	\item We can consider a \emph{uniform} upper bound \cite{Bedrossian_preprint_1,Chernov1998, Dolgopyat1998, Liverani1995, Pollicott1981}.
	\medskip
	\item We can say that each function is $\BigOh{\indivBigOh}$ where $\indivBigOh(t)$ is some rate function \cite{Young1999}.
	This lifts the tail of the rate function by multiplying by a constant that depends on $g \in X$.
	\medskip
\item We can instead lift the tail of the rate function by translation and say that each function is bounded above by a translation of some rate function \cite{Elgindi2018}.
\end{enumerate}
\smallskip
We summarize as follows (for concreteness, fix some $q > 0$ and consider $X= H^{q}(\Vol)$):

\begin{enumerate}
	\item Correlations decay at the \emph{uniform} rate $\unif(t)$  for $g \in H^q$ if
	\begin{equation}
			\left| \pair{f^t}{g} \right| \leq \unif(t) \norm{g}_{H^q} ~ \forall g \in H^q \,.
	\end{equation}
	\item Correlations decay at the \emph{asymptotic} rate $\indivBigOh(t)$  for $g \in H^q$ if \begin{equation}\label{dfn:asymptotic}
	\left| \pair{f^t}{g} \right| = \BigOh{\indivBigOh} \text{, for each } g \in H^q \,.
	\end{equation}
	\item Correlations decay at the \emph{translational} rate $\indiv(t)$  for $g \in H^q$ if for each $g \in H^q$ there exists $\tau \in \mathds{R}$ such that for all $t > \tau $ we have \begin{equation}\label{dfn2}
		\left| \pair{f^t}{g} \right| \leq \indiv(t - \tau) \norm{g}_{H^q} \,.
	\end{equation}
\end{enumerate}
\smallskip

Duality implies that the smallest uniform rate is the mix-norm $\norm{f^t}_{H^{-q}} .$
Since any uniform rate trivially satisfies the definitions of asymptotic rate and translational rate, the question is whether there is a $\indivBigOh$ (or $\indiv$) that decays faster than $\norm{f^t}_{H^{-q}}$.
We answer this question by showing that we cannot have  $\indivBigOh = \littleOh{\norm{f^t}_{H^{-q}} }$.
Similarly, given the additional assumption that $\limsup_{t \to \infty}  \indiv(t - \tau) / \indiv(t)$ is finite, we cannot have $\indiv = \littleOh{\norm{f^t}_{H^{-q}} }$.
We do this by constructing an observable $g \in H^q$ such that $\lvert \pair{f^t}{g} \rvert$ decays arbitrarily closely to the mix-norm.
Namely, for any positive $h(t) = \littleOh{\norm{f^t}_{H^{-q}} }$ there is a~$g \in H^q$ such that $\lvert \pair{f^t}{g} \rvert$ is \BigOhText\ but not \littleOhText\ of $h$.

Note that this is not the same as asymptotic equivalence because the correlation may be much smaller than $h$ at certain times.
Moreover, we may take $h = \norm{f^t}_{H^{-q}}$ above if and only if there is a finite set $I$ of Fourier modes where the $H^{-q}$ norm of the projection $P_I f^t$ of $f^t$ onto the modes $I$ is \BigOhText\ but not \littleOhText\ of $\norm{f^t}_{H^{-q}}$.
In this case, we refer to $f^t$ as \emph{\rec} (otherwise it is \emph{\tran}) and the decay of the mix-norm is characterized by $P_I f^t$.

In \cref{Overview} we introduce the key definitions and main theorems.
\cref{Examples} contains examples, and \cref{proofs,proofs2} contains the full proofs to the theorems.

\section{Overview}\label{Overview}

Throughout, it will be more convenient to work with the homogeneous Sobolev spaces $\dot{H}^{\alpha}$ for $\alpha \in \mathds{R}$.
Since the torus $\Vol$ is a compact manifold, Poincar\'e's inequality applies \citep{Hebey} so that the $H^{\alpha}$ norm and $\dot{H}^{\alpha}$ norm are equivalent for mean-zero functions.
For $\alpha>0$, the $\Sobo^{-\alpha}$ norm is typically defined via the duality equation
\begin{equation*}
\norm{\fw}_{\Sobo^{-\alpha}} = \sup\limits_{\substack{ g \in \Sobo^{\alpha}} } \frac{\lvert \pair{f}{g} \rvert }{\norm{g}_{\Sobo^{\alpha}} }.
\end{equation*}
However, there is an equivalent definition \citep{Folland} for all $\alpha \in \mathbb{R}$.
Let $f_{\kv} = \int_{\Vol} f(\xv)\, \ee^{ -2 \pi i  \xv \cdot \kv}~d\xv$ denote the Fourier coefficients of $f(\xv)$.
Then
\begin{equation*}
\norm{f}_{\dot{H}^\alpha} = \left( \sum_{\kv \neq \bm{0}} k^{2\alpha}\, | f_{\kv} |^2 \right)^{\efrac{1}{2}}
\end{equation*}
where  $k^2 = |\kv|^2 = |k_1|^2 + \dots + |k_d|^2$.
We will typically omit the $\kv \neq 0$ on the sum since  $f_{\bm{0}}= 0 $ for mean-zero functions.

Similarly, correlations have a simple expression.
Since the trigonometric functions $\{ \ee^{ 2 \pi i  \xv \cdot \kv} \}$ provide an orthonormal basis for $L^2(\Vol)$, the Fourier transform is a unitary map to $\ell^2(\mathbb{Z}^d)$.
Therefore the Fourier transform preserves the inner product \citep{Folland} and we have Plancherel's Theorem:
\begin{equation*}
\pair{f}{g}  = \sum_{\kv} f_\kv ~\bar{g}_\kv \quad \forall f,g \in L^2(\Vol)  .
\end{equation*}

For time-dependent~$f^t(\xv)$, the duality equation implies $	\left| \pair{f^t}{g} \right|\leq   \norm{\fw^t}_{\Sobo^{-\qq}} \norm{g}_{\Sobo^{\qq}}$ for all $t$.
Moreover, fix $t = t_0$ and take $g$ with Fourier coefficients
\begin{equation}\label{familiar}
g_\kv = f_\kv^{t_0}\,  k^{-2q} \norm{f^{ t_0 }}_{\Sobo^{-q}}^{ - 1}.
\end{equation}
Then $\norm{g}_{\Sobo^{q}} = 1$ and Plancherel's Theorem gives $\pair{f^{t_0}}{g}  = \norm{\fw^{t_0}}_{\Sobo^{-\qq}}$.
The correlation achieves the mix-norm at the time $t_0$.
Since $t_0$ is arbitrary, we see that $ \norm{\fw^t}_{\Sobo^{-\qq} }$ is the envelope of the set of functions $\left| \pair{f^t}{g} \right| $ with $\norm{g}_{\Sobo^{\qq}} = 1$, as in \cref{fig:envelope}.
\begin{figure}
	\begin{tikzpicture}[scale = .6, xscale=1.5,yscale=1.5]
	\draw [<->, ultra thick] (0,4.25) -- (0,-.25) -- (5.25,-.25);
	\draw[black, ultra thick, domain=0:5] plot (\x, {4/(\x+1)  });
	\draw [ultra thick, cyan] (0,1) to [out=45,in=135] (1,2) to [out=-45,in=170] (2,.25) to [out=-10,in=179] (5,.1);
	\draw [fill] (1,2) circle [radius=.05];
	\draw [black] (5,2/3) node [right] {$ \norm{\fw^t}_{\Sobo^{-\qq}}$} ;
	\draw [ultra thick, green] (0,1/2) to [out=0,in=156.037] (2,4/3) to [out=-23.962,in=170] (3,.25) to [out=-10,in=179] (5,.15) ;
	\draw [fill] (2,4/3) circle [radius=.05];
	\draw [ultra thick, red] (0,0) to [out=0,in=165.964] (3,1) to [out=-14.036,in=170] (4,.25) to [out=-10,in=179] (5,.2) ;
	\draw [fill] (3,1) circle [radius=.05];
	\end{tikzpicture}
	\caption{ Plotted above are the mix-norm and correlations with different choices of $g$, demonstrating that the mix-norm is the envelope over $\left| \pair{f^t}{g} \right| $ with $\norm{g}_{\Hq} = 1$.
	}
	\label{fig:envelope}
\end{figure}
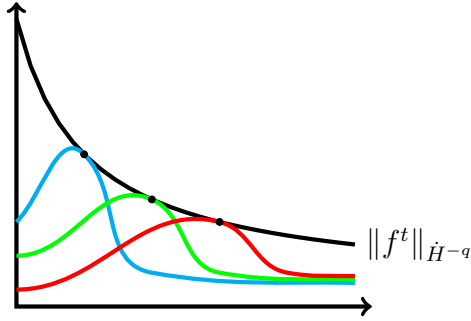

\jlt{My problem with this remark is that we only introduce Fourier energy later.  Maybe keep it briefer?}

\bwo{I agree that it is wordy. How about the following:}

\begin{remark}
	We assume in \cref{familiar} that  $\norm{f^{ t_0 }}_{\Sobo^{-q}} > 0$.
	The case $ \norm{f^{ t_0 }}_{\Sobo^{-q}} = 0$ is degenerate in the context of the advection-diffusion equation and dynamical systems.
	In those settings, if the mix-norm is zero at any finite time it will remain zero for future times.
	Hence we subsequently assume that $ \norm{f^{ t }}_{\Sobo^{-q}} > 0$ for all $t>0$ to simplify the presentation of our results.
\end{remark}

Using only duality, the most that can be said about the relationship between the rate of decay of a correlation and the rate of decay of the mix-norm is that
\begin{equation*}
	\l| \pair{f^t}{g} \r| = \BigOh{\norm{f^t}_{\Sobo^{-\qq}}}  \text{ for each }  g \in \Hq.
\end{equation*}
However, each correlation could decay strictly faster than the mix-norm as illustrated in \cref{fig:envelope}.
We are then led to ask if such a situation is possible.

When is $\l| \pair{f^t}{g} \r| = \littleOh{ \norm{f^t}_{\Sobo^{-\qq}} }$ for each $g \in \Hq\,$?
To answer this question, we must construct functions $g \in \Hq$ such that the correlations $\l| \pair{f^t}{g} \r|$ decay as slowly as possible.
To do this, we first classify $f^t$ as either \rec\ or \tran\ as follows.

For a set $I \subset \mathds{Z}^d$ let
\begin{equation*}
	P_I f^t = \sum_{\kv \in I} f^t_\kv\, \ee^{ 2 \pi i  \xv \cdot \kv}
\end{equation*}
denote the projection of $f^t$ onto the Fourier modes $\kv \in I$.
Then
\begin{equation*}
	\norm{P_I f^{t}}_{\Sobo^{-\qq}}^2 = \sum\limits_{\kv \in I} k^{-2 q}\,|f_\kv^{t}|^2
\end{equation*}
measures the amount of mix-norm supported on $I$.
We often refer to this as the \emph{Fourier energy} contained in~$I$.
This notion of energy is~$q$-dependent, though the~$q$ will usually be clear from the context.

\begin{definition}\label{recurrent}
	We say $f^t$ is \emph{\rec} if there exists a finite set $I \subset \mathds{Z}^d$ such that
	\begin{equation}\label{dfn:qrec:limsup}
	\limsup_{t \to \infty} \frac{\norm{P_I f^t}_{\Sobo^{-\qq}} }{\norm{f^t}_{\Sobo^{-\qq}} } > 0.
	\end{equation}
	Functions that are not \rec\ will be called \emph{\tran}.
\end{definition}

\begin{remark}
We emphasize that $q$-recurrence is a property of $f^t$ which encompasses both the stirring action and the initial condition coupled together.
To clarify, in the context of the advection-diffusion equation~\eqref{advDiffEqn}, $q$-recurrence is a property of a particular realization of $\bm{u}$ and $f^0$ taken together -- it is {\it not} just a property of the vector field $\bm{u}$.
Moreover, a given $f^t$ may be \rec\ for some (larger) values of $q$ and \tran\ for other (smaller) $q$.
\end{remark}

From inequality \eqref{dfn:qrec:limsup} and the trivial bound $\norm{P_I f^t}_{\Sobo^{-\qq}} \leq \norm{f^t}_{\Sobo^{-\qq}}$ we see that $\norm{P_I f^{t}}_{\Sobo^{-\qq}}$ is \BigOhText\ but not \littleOhText\ of $\norm{f^{t}}_{\Sobo^{-\qq}}$.
Unpacking the definition of limit supremum offers another interpretation: there exists $\delta>0$ and a sequence $t_m \to \infty$ where
\begin{equation}\label{support}
\norm{P_I f^{t_m}}_{\Sobo^{-\qq}} \geq \delta  \norm{f^{t_m}}_{\Sobo^{-\qq}}.
\end{equation}
This means that there is at least a $\delta$ fraction of the mix-norm supported on $ I$ at arbitrarily large times.
As time progresses the Fourier energy could move off of $I$, but we can always find a future time $t_{m+1}$ where a proportion $\delta$ of the mix-norm is again on $I$.
In other words, some Fourier energy always returns to populate the spatial scales in $I$.
In this case, test functions $g$ with coefficients for $\kv \in I$ similar to that in \cref{familiar} will match well with $f^t$ at times $t_m$ (after possibly taking a subsequence) so that in \cref{proofs} we can prove the following theorem, a central result of our paper.

\begin{restatable}{theorem}{theoremOne}\label{classify}
	Let $f^t$ be a mean-zero function in $L^2(\Vol)$ with  $\norm{f^{t}}_{\Sobo^{-\qq}} >0$ for all $t >0$. Then $f^t$ is \rec\ if and only if there is a function $g \in \Hq$ such that
	\begin{equation*}
		\limsup_{t \to \infty} \frac{\l| \pair{f^t}{g} \r|}{\norm{f^{t}}_{\Sobo^{-\qq}}} > 0 .
	\end{equation*}
\end{restatable}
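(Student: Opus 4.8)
The statement is an equivalence, so the plan is to prove the two implications separately; both reduce to Hilbert-space duality, with the forward direction additionally needing a compactness argument in a finite-dimensional space.

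For the forward implication (from \rec\ to the existence of $g$) I would start from the reformulation in \eqref{support}: fix a finite $I \subset \mathds{Z}^d$, a constant $\delta > 0$, and $t_m \to \infty$ with $\mixnorm{P_I f^{t_m}} \geq \delta\, \mixnorm{f^{t_m}} > 0$. The normalized projections $\psi_m \ldef P_I f^{t_m}/\mixnorm{P_I f^{t_m}}$ lie on the unit sphere of the finite-dimensional space $V_I \ldef \mathrm{span}\{\ee^{2\pi i\xv\cdot\kv}:\kv\in I\}$, on which all norms are equivalent, so I would pass to a subsequence along which $\psi_m \to \phi$ with $\mixnorm{\phi}=1$, and then take $g$ to be the dual test function of $\phi$ built exactly as in \eqref{familiar}: $g_\kv = \phi_\kv\, k^{-2q}$ for $\kv\in I$ and $g_\kv = 0$ otherwise. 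Plancherel's theorem gives $\qnorm{g}=1$ and $\pair{\phi}{g}=\mixnorm{\phi}^2=1$, and since $g$ is supported on $I$ one has $\pair{f^{t_m}}{g} = \mixnorm{P_I f^{t_m}}\,\pair{\psi_m}{g}$ with $\pair{\psi_m}{g}\to\pair{\phi}{g}=1$ by continuity of the inner product on $V_I$; hence along the subsequence $\lvert\pair{f^{t_m}}{g}\rvert/\mixnorm{f^{t_m}} \geq \delta\,\lvert\pair{\psi_m}{g}\rvert \to \delta$, which gives the claimed $\limsup > 0$.

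For the converse I would take $g \in \Hq$ with $\limsup_{t\to\infty} \lvert\pair{f^t}{g}\rvert/\mixnorm{f^t} = \constant > 0$ (so $g\neq 0$), use $\sum_\kv k^{2q}\lvert g_\kv\rvert^2<\infty$ to choose a finite $I$ with $\qnorm{(1-P_I)g}<\constant/2$, and split $\pair{f^t}{g} = \pair{P_I f^t}{g} + \pair{f^t}{(1-P_I)g}$. Applying the duality inequality to each term gives $\lvert\pair{f^t}{g}\rvert \leq \qnorm{g}\,\mixnorm{P_I f^t} + \tfrac{\constant}{2}\,\mixnorm{f^t}$; dividing by $\mixnorm{f^t}$ and taking $\limsup$ leaves $\constant \leq \qnorm{g}\,\limsup_{t\to\infty}\bigl(\mixnorm{P_I f^t}/\mixnorm{f^t}\bigr)+\constant/2$, so $\limsup_{t\to\infty} \mixnorm{P_I f^t}/\mixnorm{f^t} \geq \constant/(2\qnorm{g})>0$ and $f^t$ is \rec\ with this $I$.

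I expect the main obstacle to be the forward direction: a single fixed $g$ must correlate with $f^{t_m}$ at infinitely many times even though the Fourier energy on $I$ can be distributed among the modes of $I$ with different magnitudes and phases at different $t_m$, so the naive choice (the dual test function of $P_I f^{t_0}$ for one time $t_0$) will generally fail later. The resolution, and the one step needing care, is that finiteness of $I$ makes the profiles $\psi_m$ precompact, so a subsequence selects a single limiting profile $\phi$ whose dual test function works uniformly along that subsequence; everything else is routine manipulation with Plancherel's theorem and the bound $\lvert\pair{f^t}{g}\rvert\leq\mixnorm{f^t}\,\qnorm{g}$.
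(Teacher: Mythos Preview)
Your proof is correct and both directions take a genuinely different route from the paper.

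For the forward direction, the paper (via its \cref{case1}) uses a pigeonhole argument: since $I$ is finite there are only $4^{|I|}$ possible sign patterns for the real and imaginary parts of $(f^{t_m}_\kv)_{\kv\in I}$, so one pattern recurs infinitely often, and $g$ is built from that fixed sign pattern as $g_\kv=(c_\kv+id_\kv)k^{-q}$. Your compactness argument on the unit sphere of $V_I$ is cleaner: it directly produces a limiting profile $\phi$ whose dual test function works, and the estimate $\lvert\pair{\psi_m}{g}\rvert\to 1$ is sharper than the paper's $\ell^1$--$\ell^2$ comparison. Both rely essentially on the same fact (finiteness of $I$ gives finite-dimensional compactness), but yours packages it more transparently.

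For the converse, the paper argues by contradiction using its \cref{intervals} (the machinery later reused for \cref{maintheoremLabel}): assuming $f^t$ is \tran\ it builds disjoint annuli $I_i$ and shows any $g$ violating the conclusion would have $\sum_{\kv\in I_i}\lvert g_\kv\rvert^2 k^{2q}\ge C_0^2$ for every $i$, forcing $\qnorm{g}=\infty$. Your three-line argument---truncate $g$ in Fourier so that the tail has $\Hq$ norm below $c/2$, split the pairing, and apply duality to each piece---is far more direct and avoids \cref{intervals} entirely. The paper's heavier route pays off only because that lemma is needed anyway for \cref{maintheoremLabel}; for \cref{classify} in isolation your argument is strictly more economical.
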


Equivalently, there is a function $g \in \Hq$, a constant $c>0$, and a sequence $t_m \to \infty$ where
\begin{equation}
  \l|\pair{f^{t_{m} }}{g}\r| \geq c\, \norm{f^{t_m}}_{\Sobo^{-\qq}}.
  \label{eq:limsupdef}
\end{equation}

\begin{figure}
	\begin{tikzpicture}[scale = .6, xscale=1.5,yscale=1.5]
	\draw [<->, ultra thick] (0,4.25) -- (0,-.25) -- (5.25,-.25);
	\draw[black, ultra thick, domain=0:5] plot (\x, {4/(\x+1)  });
	\draw[red, ultra thick, domain=0:5] plot (\x, {2/(\x+1)  });
	\draw [ultra thick, cyan] (0, -.25) to [out=0,in=180] (.3 , -.25) to [out=0,in=180] (1,1.2) to [out=0,in=180] (1.7 , -.25) to [out=0,in=180] (2,-.25) to [out=0,in=180] (2.7 , .8) to [out=0,in=180] (3.4 , -.25) to [out=0,in=180] (4, -.25) to [out=0,in=180] (4.5 , .5) to [out=0,in=180] (5,-.25) ;
	\draw [black] (5.2,-.1) node [right] {\tiny $\left| \pair{f^t}{g} \right|$} ;
	\draw [black] (5.2,1.1) node [right] {\tiny $ \norm{\fw^t}_{\Sobo^{-\qq}}$} ;
	\draw [black] (5.2,.5) node [right] {\tiny $ c \norm{\fw^t}_{\Sobo^{-\qq}}$} ;
	\draw [fill] (4.5,.5) circle [radius=.05];
	\draw [fill] (2.7,.8) circle [radius=.05];
	\draw [dashed, ultra thick]  (2.7,.8) -- (2.7,-.25) node [below] {\tiny $t = t_m$};
	\draw [dashed, ultra thick]  (4.5,.5) -- (4.5,-.25) node [below] {\tiny $t = t_{m+1}$};
	\end{tikzpicture}
	\caption{There exists $g \in \Hq$ where $\left| \pair{f^t}{g} \right|$ does not decay faster than $ c \norm{\fw^t}_{\Sobo^{-\qq}}$.}
	\label{fig:digest}
\end{figure}

\begin{remark}
As demonstrated in \cref{fig:digest}, it is possible that $\l|\pair{f^{t }}{g}\r|$ is small at times $t \neq t_m$ and so we do not show asymptotic equivalence.
We interpret our result as demonstrating that the correlation does not decay asymptotically faster than the mix-norm in the sense that $\l| \pair{f^t}{g} \r|$ is \BigOhText\ but not \littleOhText\ of $\norm{\fw^t}_{\Sobo^{-\qq}}$.
From this theorem, the answer to our previously posed question `when is $\l| \pair{f^t}{g} \r| = \littleOh{ \norm{f^t}_{\Sobo^{-\qq}} }$ for each $g \in \Hq\,$?' is exactly when $f^t$ is \tran.
\end{remark}

This naturally prompts us to ask if $f^t$ is \tran\ and we carefully choose $g \in \Sobo^q$, how slowly can we make $ \l| \pair{f^t}{g} \r|$ decay?
The following theorem answers this question.
\begin{restatable}{theorem}{maintheorem}\label{maintheoremLabel}
	Let $f^t$ be a mean-zero function in $L^2(\Vol)$ with  $\norm{f^{t}}_{\Sobo^{-\qq}} >0$ for all $t >0$. For any positive function $h(t)$ such that $\rate(t) = \littleOh{ \mixnorm{f^t} }$, there is a function $g \in \Hq$ such that
	\begin{equation*}
	\limsup_{t \to \infty} \frac{\l| \pair{f^t}{g} \r|}{\rate(t)} > 0 .
      \end{equation*}
\end{restatable}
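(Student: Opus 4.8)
The plan is to obtain the statement directly from the uniform boundedness principle; this also sidesteps any split into the \rec\ and \tran\ cases. Fix an arbitrary sequence $t_n\to\infty$ and, for each $n$, let $\Lambda_n\colon\Hq\to\mathds{R}$ be the linear functional $\Lambda_n(\phi)=\pair{f^{t_n}}{\phi}\,\rate(t_n)^{-1}$; it is bounded because $\rate(t_n)>0$ and $\mixnorm{f^{t_n}}\le\norm{f^{t_n}}_{\Ltwo}<\infty$. The duality characterization of the mix-norm gives $\norm{\Lambda_n}_{(\Hq)^*}=\mixnorm{f^{t_n}}\,\rate(t_n)^{-1}$: the ``$\le$'' half is Cauchy--Schwarz after Plancherel, and the ``$\ge$'' half is precisely the computation around \cref{familiar}, which produces a unit vector of $\Hq$ on which $\pair{f^{t_n}}{\cdot}$ attains $\mixnorm{f^{t_n}}$. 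Since $\rate>0$ and $\rate(t)=\littleOh{\mixnorm{f^t}}$, we have $\mixnorm{f^t}/\rate(t)\to\infty$, hence $\sup_n\norm{\Lambda_n}=\infty$. As $\Hq$, restricted to mean-zero functions, is a complete normed space (equivalent to $H^\qq$ by Poincar\'e, which is complete), Banach--Steinhaus forbids $\{\Lambda_n\}$ from being pointwise bounded: there is $g\in\Hq$ with $\sup_n\lvert\Lambda_n(g)\rvert=\infty$, so $\sup_n\lvert\pair{f^{t_n}}{g}\rvert\,\rate(t_n)^{-1}=\infty$ and a fortiori $\limsup_{t\to\infty}\lvert\pair{f^t}{g}\rvert/\rate(t)=\infty>0$.

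If instead one wishes to exhibit $g$ explicitly, in the spirit of the proof of \cref{classify}, a gliding-hump construction works. When $f^t$ is \rec\ nothing new is needed: \cref{classify} already gives $g\in\Hq$, a $c>0$, and $t_m\to\infty$ with $\lvert\pair{f^{t_m}}{g}\rvert\ge c\,\mixnorm{f^{t_m}}$, so that $\lvert\pair{f^{t_m}}{g}\rvert/\rate(t_m)\ge c\,\mixnorm{f^{t_m}}/\rate(t_m)\to\infty$. When $f^t$ is \tran, I would take $g=\sum_{m\ge1}g^{(m)}$, with $g^{(m)}$ supported on a finite symmetric band $I_m\subset\mathds{Z}^d$ and carrying there the coefficients $\epsilon_m\,f^{t_m}_\kv\,k^{-2q}\,\mixnorm{f^{t_m}}^{-1}$ of \cref{familiar} type, rescaled by amplitudes $\epsilon_m$. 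Prescribe the $\epsilon_m>0$ in advance with $\epsilon_m\downarrow0$, $\sum_m\epsilon_m^2<\infty$, and $\sum_{m'>m}\epsilon_{m'}\le\epsilon_m$ for all $m$; then, having fixed $g^{(1)},\dots,g^{(m-1)}$ (supported in the finite set $J_{m-1}$), choose $t_m$ large enough that $\mixnorm{P_{J_{m-1}}f^{t_m}}\sum_{m'<m}\epsilon_{m'}$ is small compared to $\epsilon_m\mixnorm{f^{t_m}}$ (possible since $\sum_{m'<m}\epsilon_{m'}$ is a fixed finite number, $J_{m-1}$ is finite, and $f^t$ is \tran) and, simultaneously, $\mixnorm{f^{t_m}}\ge\epsilon_m^{-1}\rate(t_m)$ (possible since $\rate=\littleOh{\mixnorm{f^t}}$), and finally pick $I_m\subset\mathds{Z}^d\setminus J_{m-1}$ finite, symmetric, carrying at least half of $\mixnorm{f^{t_m}}$. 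Then $\pair{f^{t_m}}{g^{(m)}}\ge\tfrac14\epsilon_m\mixnorm{f^{t_m}}$, the earlier humps contribute in total at most $\mixnorm{P_{J_{m-1}}f^{t_m}}\sum_{m'<m}\epsilon_{m'}$, the later ones at most $\mixnorm{f^{t_m}}\sum_{m'>m}\epsilon_{m'}$, and the choices above keep each of these below a fixed fraction of the main term; combined with $\mixnorm{f^{t_m}}\ge\epsilon_m^{-1}\rate(t_m)$ this yields $\lvert\pair{f^{t_m}}{g}\rvert\ge c'\,\rate(t_m)$ along $t_m\to\infty$ for a fixed $c'>0$, while $\qnorm{g}^2=\sum_m\epsilon_m^2<\infty$ gives $g\in\Hq$.

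I expect the only genuine obstacle to lie in the constructive route: transience makes $\mixnorm{P_{J_{m-1}}f^{t_m}}$ small only \emph{relative to} $\mixnorm{f^{t_m}}$, not relative to $\rate(t_m)$ (which may decay far faster), so the hump amplitude $\epsilon_m$ has to serve as a buffer between these two scales — this is exactly why one must fix the whole sequence $(\epsilon_m)$ before choosing the times $t_m$, and only then take each $t_m$ large enough to meet both smallness requirements at once; the remaining steps (estimating the two interference sums and checking $g\in\Hq$) are routine bookkeeping. Along the uniform-boundedness route there is no comparable obstacle, only the need to confirm that $\Hq$ is complete and that $\norm{\Lambda_n}=\mixnorm{f^{t_n}}\,\rate(t_n)^{-1}$, which is the content of \cref{familiar}.
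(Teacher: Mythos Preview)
Your uniform-boundedness argument is correct and is a genuinely different, cleaner route than the paper's. The paper never invokes Banach--Steinhaus; instead it works entirely constructively. It introduces a finer dichotomy---\typeone\ versus \typetwo\ (\cref{def:qhrec}), rather than \rec\ versus \tran---and in the \typeone\ case invokes \cref{case1} directly, while in the \typetwo\ case it applies \cref{intervals} to manufacture disjoint frequency bands $I_i$ with \cref{intervals:property:capturesFourierMass,intervals:property:FourierMassDoesNotReturn}, passes to a subsequence so that the ratios $\rationo{i_\ell}$ are summable and geometrically decreasing, and then sets $g_\kv=f_\kv^{T_{i_\ell}}k^{-2q}\mixnorm{f^{T_{i_\ell}}}^{-2}h(T_{i_\ell})$ on each band. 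The interference estimates are then carried out exactly as in your sketch, yielding $\l|\pair{f^{T_{i_\ell}}}{g}\r|\ge(1-3\delta)\,h(T_{i_\ell})$. Your gliding-hump sketch is thus close in spirit to the paper's proof, though the paper's amplitude $h(T_{i_\ell})/\mixnorm{f^{T_{i_\ell}}}$ is dictated by the data rather than prescribed in advance as your $\epsilon_m$ are, and the paper's use of $(q,h)$-transience gives control of $\mixnorm{P_I f^t}$ relative to $h$ directly (your \cref{intervals:property:FourierMassDoesNotReturn}), avoiding the buffer manoeuvre you flag as the obstacle.

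What each approach buys: the paper's construction exhibits $g$ explicitly in terms of the Fourier data of $f^t$, which matches the paper's emphasis on the form~\eqref{familiar2} and is useful if one later wants quantitative control on $\qnorm{g}$. Your Banach--Steinhaus argument is shorter, requires no case split, and in fact delivers the stronger conclusion $\limsup_{t\to\infty}\l|\pair{f^t}{g}\r|/h(t)=\infty$ rather than merely $>0$; the price is that $g$ is produced non-constructively. One minor bookkeeping point in your hump sketch: with $\sum_{m'>m}\epsilon_{m'}\le\epsilon_m$ the later-hump error can equal the main term, so you would want a slightly sharper condition such as $\sum_{m'>m}\epsilon_{m'}\le\tfrac18\epsilon_m$; this is harmless.
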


\begin{remark}
\cref{classify,maintheoremLabel} do not require $f^t$ to be bounded uniformly in $L^2(\Vol)$ in time, nor for the mix-norm to decay to zero.
Additionally, \cref{maintheoremLabel} is valid whether $f^t$ is \rec\ or \tran.
\end{remark}

The proof of \cref{maintheoremLabel} is deferred until \cref{proofs2}, but we present the idea behind the proof now.
If $f^t$ is \rec, then the proof is accomplished by a result similar to \cref{classify}.
For \tran\ functions, the proof relies on the construction of sets $I_m$ and times $t_m$ satisfying certain properties, the first being that we want the finite disjoint sets $I_m \in \mathds{Z}^\sdim$ to capture a large amount of the Fourier energy at time $t_m$.
We can do this since $q$-transience ensures that we can wait for the next time $t_{m+1}$ for a proportion of the Fourier energy for moves off of $I_m$ and never comes back.
Then by choosing the Fourier coefficients of $g$ on $I_m$ to agree with $f^t$ at time $t_m$, we can guarantee that $\l| \pair{f^t}{g} \r|$ will be large at time $t_m$. Hence, the function $g$ in \cref{maintheoremLabel} which gives the slowly decaying $\l| \pair{f^t}{g} \r| $ has Fourier coefficients
\begin{equation}\label{familiar2}
g_\kv =
\begin{cases}
f_\kv^{t_m}  k^{-2q} \norm{f^{t_m}}_{\Sobo^{-q}}^{ - 2} \,\rate(t_m), & \kv \in I_m; \\
0, & \text{otherwise}; \\
\end{cases}
\end{equation}
where $I_m$ are disjoint and $\norm{P_{I_m}f^{t_m}}_{\Sobo^{-q}}$ captures a nonzero proportion of $h(t_m) $, similar to inequality~\eqref{support}. These coefficients are similar to those of \cref{familiar} except with an extra factor of $h/\mixnorm{f}$. This factor is needed so that we can satisfy the second property we require from the sets $I_m$ and times $t_m$: by taking a subsequence, we can use the fact that $\rate(t) = \littleOh{ \mixnorm{f^t} }$ to make the $g_\kv$ decay fast enough as $k \to \infty$ to have $g \in \Sobo^q$.
Hence, although correlations may not achieve the decay rate of the mix-norm, they may achieve the decay rate of $h$.

These theorems allow us to show the result outlined in the introduction.
The following corollary reveals it is not possible to find a $\indivBigOh$ or $\indiv$ (under a given growth condition) that is \littleOhText\ of the mix-norm.
We remark that the growth condition on $\indiv$ that $\limsup_{t \to \infty}  \indiv(t - \tau) / \indiv(t)$ is finite is satisfied by power law and exponential functions but not by double exponential functions.
\begin{corollary}\label{indivVersusMixnorm}
	\
	\begin{enumerate}
		\item 	For any $\indivBigOh$ satisfying \cref{dfn:asymptotic}, we have
		\begin{equation*}
		\limsup_{t \to \infty} \frac{\indivBigOh(t)}{\norm{f^t}_{H^{-q}}} > 0 \,.
		\end{equation*}

		\item For $\indiv$ satisfying \cref{dfn2} and $\limsup_{t \to \infty}  \indiv(t - \tau) / \indiv(t)$ finite for any $\tau \in \mathds{R}$, we have
		\begin{equation*}
		\limsup_{t \to \infty} \frac{\indiv(t)}{\norm{f^t}_{H^{-q}}} > 0 \,.
		\end{equation*}
	\end{enumerate}

\end{corollary}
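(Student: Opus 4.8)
The plan is to prove both statements by contradiction, reducing everything to \cref{maintheoremLabel}; moreover statement~(2) will reduce to statement~(1), so the substance is statement~(1). For statement~(1), suppose for contradiction that $\indivBigOh(t) = \littleOh{\norm{f^t}_{H^{-q}}}$ (we may assume $\indivBigOh$ positive, being a rate function). The subtlety is that applying \cref{maintheoremLabel} directly with $h=\indivBigOh$ only yields a $g$ with $\limsup_{t\to\infty}\lvert\pair{f^t}{g}\rvert/\indivBigOh(t)>0$, which is entirely consistent with $\lvert\pair{f^t}{g}\rvert=\BigOh{\indivBigOh}$ since $\BigOh{\cdot}$ only asks that this $\limsup$ be \emph{finite}. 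To force a contradiction I would interpolate: put
\[
  h(t) \ldef \sqrt{\indivBigOh(t)\,\norm{f^t}_{H^{-q}}},
\]
a positive function with $h(t)=\littleOh{\norm{f^t}_{H^{-q}}}$ (since $h(t)/\norm{f^t}_{H^{-q}}=\sqrt{\indivBigOh(t)/\norm{f^t}_{H^{-q}}}\to 0$) and $\indivBigOh(t)=\littleOh{h(t)}$ (since $\indivBigOh(t)/h(t)=\sqrt{\indivBigOh(t)/\norm{f^t}_{H^{-q}}}\to 0$). Then \cref{maintheoremLabel} gives $g\in\Hq$, a constant $c>0$, and times $t_m\to\infty$ with $\lvert\pair{f^{t_m}}{g}\rvert\geq c\,h(t_m)$, whence
\[
  \frac{\lvert\pair{f^{t_m}}{g}\rvert}{\indivBigOh(t_m)} \;\geq\; c\,\frac{h(t_m)}{\indivBigOh(t_m)} \;=\; \frac{c}{\sqrt{\indivBigOh(t_m)/\norm{f^{t_m}}_{H^{-q}}}} \;\longrightarrow\; \infty ,
\]
so $\lvert\pair{f^t}{g}\rvert$ is not $\BigOh{\indivBigOh}$, contradicting \cref{dfn:asymptotic} for this $g$. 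Hence $\limsup_{t\to\infty}\indivBigOh(t)/\norm{f^t}_{H^{-q}}>0$.

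For statement~(2), I would show that a translational rate $\indiv$ obeying the stated growth condition is automatically an asymptotic rate, and then quote statement~(1) with $\indivBigOh=\indiv$. Indeed, fix $g\in\Hq$ and let $\tau=\tau(g)$ be as in \cref{dfn2}. Since $\limsup_{t\to\infty}\indiv(t-\tau)/\indiv(t)=:M<\infty$, there is $T$ with $\indiv(t-\tau)\leq (M+1)\,\indiv(t)$ for $t>T$, so $\lvert\pair{f^t}{g}\rvert\leq (M+1)\norm{g}_{\Hq}\,\indiv(t)$ for $t>\max(T,\tau)$; that is, $\lvert\pair{f^t}{g}\rvert=\BigOh{\indiv}$. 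As $g$ is arbitrary, $\indiv$ satisfies \cref{dfn:asymptotic}, and statement~(1) applies.

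The one genuinely delicate point is the interpolation in statement~(1): one cannot feed $\indivBigOh$ itself into \cref{maintheoremLabel}, because "not $\littleOhText$" is strictly weaker than "not $\BigOhText$". Squeezing an auxiliary rate $h$ strictly between $\indivBigOh$ and the mix-norm and running \cref{maintheoremLabel} on $h$ is exactly what upgrades the $\limsup>0$ conclusion into the required failure of the $\BigOh{\cdot}$ bound. Everything else — positivity of the rate functions, the $\limsup$ bookkeeping, and the reduction of~(2) to~(1) — is routine.
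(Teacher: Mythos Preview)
Your proof is correct and uses the same key idea as the paper: the geometric-mean interpolation $h=\sqrt{\indivBigOh\,\norm{f^t}_{H^{-q}}}$ to squeeze an auxiliary rate strictly between $\indivBigOh$ and the mix-norm before invoking \cref{maintheoremLabel}. The only minor difference is in part~(2): the paper repeats the interpolation argument with an extra factor $\indiv(t-\tau)/\indiv(t)$ inserted into the product-of-limsups chain, whereas you first observe that the growth condition makes $\indiv$ an asymptotic rate and then quote part~(1) --- a slight streamlining, but not a substantively different route.
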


\begin{proof}[Proof of \cref{indivVersusMixnorm}]
	Seeking contradiction we suppose there is a~$\indivBigOh(t)$ satisfying \cref{dfn:asymptotic} such that $\indivBigOh(t) = \littleOh{\norm{f^t}_{\Sobo^{-\qq}}}$.
	Choosing $h(t) = \sqrt{\indivBigOh(t) \norm{f^t}_{\Sobo^{-\qq}}}$, the geometric mean of $\indivBigOh$ and the mix-norm, we see
	\begin{equation}\label{indivVersusMixnorm:helper}
		\limsup_{t \to \infty} \frac{h}{\norm{f^t}_{\Sobo^{-\qq}} } = \limsup_{t \to \infty} \sqrt{ \frac{\indivBigOh}{\norm{f^t}_{\Sobo^{-\qq}}} } = 0\,.
	\end{equation}
	Then \cref{maintheoremLabel} assures there is a~$g \in \Hq$ with
	\begin{equation*}
		\limsup_{t \to \infty} \frac{ \l| \pair{f^t}{g} \r| }{h} > 0\,.
	\end{equation*}
	Then we have arrived at a contradiction:
	\begin{equation*}
		\limsup_{t \to \infty} \frac{ \l| \pair{f^t}{g}\r| }{h} \leq  \limsup_{t \to \infty} \frac{ \l| \pair{f^t}{g}\r| }{\indivBigOh} \cdot \limsup_{t \to \infty} \frac{ \indivBigOh}{h} = 0
	\end{equation*}
	since
	$	\limsup_{t \to \infty}  \l| \pair{f^t}{g} \r| / \indivBigOh 	$ is finite by \cref{dfn:asymptotic} and
	$	\limsup_{t \to \infty}  \indivBigOh / h = 0 $ as in \cref{indivVersusMixnorm:helper}.

	 A similar argument gives us the second half of the corollary.
	 In this case choose $h(t) = \sqrt{\indiv(t) \norm{f^t}_{\Sobo^{-\qq}}}$ and apply \cref{maintheoremLabel} to produce a test function $g$ which comes with a $\tau$ from \cref{dfn2}.
	 Then we have another contradiction:
	 \begin{equation*}
	 \limsup_{t \to \infty} \frac{ \l| \pair{f^t}{g}\r| }{h(t)} \leq  \limsup_{t \to \infty} \frac{ \l| \pair{f^t}{g}\r| }{\indiv(t - \tau)} \cdot \limsup_{t \to \infty} \frac{ \indiv(t - \tau) }{\indiv(t)} \cdot \limsup_{t \to \infty}  \frac{ \indiv(t)}{h(t)} = 0
	 \end{equation*}
	 since $\limsup_{t \to \infty} \indiv(t - \tau) / \indiv(t) $ is finite by hypothesis.
\end{proof}

In \cref{Examples} we present an example of a \tran\ function and alter it to send energy down the spectrum \textit{less} efficiently, resulting in $q$-recurrence for a range of $q$.
We then include diffusion at every time step, demonstrating a transition to $q$-recurrence for all $q>0$.
We include a numerical example and provide intuition about how to recognize when~$f^t$ is \rec.
Finally, we prove the theorems in generality in \cref{proofs,proofs2}.

\section{Examples}\label{Examples}

\begin{Example}[baker's map and $q$-transience]
\begin{figure}
	\begin{tikzpicture}[xscale=1.5,yscale=1.5]
	\draw (0, 0) rectangle (0.5, 1);
	\draw [fill=blue] (1,0) rectangle (0.5,0.5);
	\draw [fill=red] (1,1) rectangle (0.5,0.5);
	\draw [->, ultra thick] (1.1,0.5) -- (2,0.5) ;
	\end{tikzpicture}
	\begin{tikzpicture}[xscale=1.5,yscale=1.5]
	\draw (0, 0) rectangle (0.25, 2);
	\draw [fill=blue] (.5,0) rectangle (0.25,1);
	\draw [fill=red] (.25,1) rectangle (0.5,2);
	\end{tikzpicture}
	\begin{tikzpicture}[xscale=1.5,yscale=1.5]
	\draw (0, 0) rectangle (0.25, 1);
	\draw [fill=blue] (.5,0) rectangle (0.25,1);
	\draw (0.75, 0) rectangle (0.25, 1);
	\draw [fill=red] (1,0) rectangle (.75,1);
	\draw [->, ultra thick] (-1,1.6) to [out=0,in=90] (.75,1.05);
	\end{tikzpicture}
	\caption{The baker's map.}
	\label{fig:baker}
\end{figure}
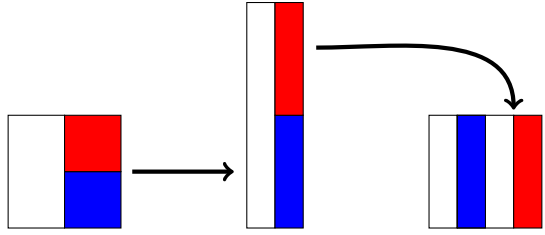
Let $B$ be the baker's map, the area preserving transformation of the domain $[ 0, 1 ]^2$ as pictured in \cref{fig:baker}.
For the $y$-independent initial function $f^0(x,y) = 2\cos \left( 2 \pi x \right)$, applying the baker's map simply doubles the frequency in the $x$ direction.
After $n$ applications of the baker's map we have $f^n = f^0 \circ B^{-n}$ $ = 2\cos \left(  2 \pi~2^n x \right)$.
As a result, the Fourier coefficients have the simple expression
\begin{equation*}
f_{\kv}^n = \begin{cases}
1 & k_1 = 2^n, k_2 = 0; \\
0 & \text{otherwise}. \label{coefficients}
\end{cases}
\end{equation*}
This is a one dimensional action on Fourier coefficients $f^n_k = f^n_{k_1, 0}$ via an infinite dimensional matrix $A_{k\ell}$ as
\begin{equation}
f^{n+1}_k = \sum_{\ell} A_{k\ell}\, f^{n}_\ell
\label{eq:Af}
\end{equation}
where
\begin{equation*}
\begin{tikzpicture}[baseline=(current  bounding  box.center)]
\matrix (m) [matrix of math nodes,
nodes={rectangle, 
	minimum size=1.2em, text depth=0.25ex,
	inner sep=0pt, outer sep=0pt,
	fill opacity=0.5, text opacity=1,
	anchor=center},
column sep=-0.5\pgflinewidth,
row sep=-0.5\pgflinewidth,
row 2/.append style = {nodes={fill=gray!50}},
row 4/.append style = {nodes={fill=gray!50}},
row 6/.append style = {nodes={fill=gray!50}},
row 8/.append style = {nodes={fill=gray!50}},
inner sep=0pt,
left delimiter=(, right delimiter=),
]
{
	\hspace{5mm} & \hspace{5mm} & \hspace{5mm} & \hspace{5mm} & \hspace{5mm} \\
	1 & \hspace{5mm} & \hspace{5mm} & \hspace{5mm} & \hspace{5mm} \\
	\hspace{5mm} & \hspace{5mm} & \hspace{5mm} & \hspace{5mm} & \hspace{5mm} \\
	\hspace{5mm} & \hspace{1mm} 1 \hspace{1mm} & \hspace{5mm} & \hspace{5mm} & \hspace{5mm} \\
	\hspace{5mm} & \hspace{5mm} & \hspace{5mm} & \hspace{5mm} & \hspace{5mm} \\
	\hspace{5mm} & \hspace{5mm} & \hspace{1mm} 1 \hspace{1mm} & \hspace{5mm} & \hspace{5mm} \\
	\hspace{5mm} & \hspace{5mm} & \hspace{5mm} & \hspace{5mm} & \hspace{5mm} \\
	\hspace{5mm} & \hspace{5mm} & \hspace{5mm} & \hspace{1mm} 1 \hspace{1mm} & \hspace{5mm} \\
	\hspace{5mm} & \hspace{5mm} & \hspace{5mm} & \hspace{5mm} &  \ddots  \\
};
\foreach \i [count=\xi from 1] in  {1,...,4}{
	\node also [label=above:{\footnotesize \xi}] (m-1-\i) {};
	\node also [label= { [label distance=3mm] right: {\footnotesize \xi} }] (m-\i-5) {};
}
\node also [label=above:{\footnotesize \dots}] (m-1-5) {};
\foreach \i [count=\xi from 5] in  {5,...,8}{
	\node also [label= { [label distance=3mm] right: {\footnotesize \xi} }] (m-\i-5) {};
}
\node also [label= { [label distance=3.5mm] right: {\footnotesize \vdots} }] (m-9-5) {};
\node also [label= { [label distance=5mm] left: { $(A_{k\ell})=$ } }] (m-5-1) {};
\end{tikzpicture}
\label{eq:Bmatrix}
\end{equation*}
is populated by 1's along a subdiagonal of slope $-2$ and 0's everywhere else.

The entire mix-norm is supported on just one Fourier mode and given any finite set $I \in \mathds{Z}^d$, it is clear that, as $n$ increases, the Fourier energy will move off of $I$ and never return.
Therefore $f^n$ is  \tran\ $\forall q>0$.
\end{Example}

\begin{Example}[altered baker's map action and $q$-recurrence]\label{ExAltBak}
We now alter the previous example so that the energy of $f^n$ is sent down the spectrum \emph{less} effectively, the result being a \emph{$q$-recurrent} function (if $q$ is large enough).
This time, consider the action on the Fourier coefficients of $f^n(x)$ as in \cref{eq:Af} via the infinite dimensional matrix
\begin{equation*}
	\begin{tikzpicture}[baseline=(current  bounding  box.center)]
		\matrix (m) [matrix of math nodes,
		nodes={rectangle, 
		minimum size=1.2em, text depth=0.25ex,
		inner sep=0pt, outer sep=0pt,
		fill opacity=0.5, text opacity=1,
		anchor=center},
		column sep=-0.5\pgflinewidth,
		row sep=-0.5\pgflinewidth,
		row 2/.append style = {nodes={fill=gray!50}},
		row 4/.append style = {nodes={fill=gray!50}},
		row 6/.append style = {nodes={fill=gray!50}},
		row 8/.append style = {nodes={fill=gray!50}},
		inner sep=0pt,
		left delimiter=(, right delimiter=),
		]
		{
		\aaa & \hspace{5mm} & \hspace{5mm} & \hspace{5mm} & \hspace{5mm} \\
		\bbb & \hspace{5mm} & \hspace{5mm} & \hspace{5mm} & \hspace{5mm} \\
		\hspace{5mm} & \hspace{5mm} & \hspace{5mm} & \hspace{5mm} & \hspace{5mm} \\
		\hspace{5mm} & \hspace{1mm} 1 \hspace{1mm} & \hspace{5mm} & \hspace{5mm} & \hspace{5mm} \\
		\hspace{5mm} & \hspace{5mm} & \hspace{5mm} & \hspace{5mm} & \hspace{5mm} \\
		\hspace{5mm} & \hspace{5mm} & \hspace{1mm} 1 \hspace{1mm} & \hspace{5mm} & \hspace{5mm} \\
		\hspace{5mm} & \hspace{5mm} & \hspace{5mm} & \hspace{5mm} & \hspace{5mm} \\
		\hspace{5mm} & \hspace{5mm} & \hspace{5mm} & \hspace{1mm} 1 \hspace{1mm} & \hspace{5mm} \\
		\hspace{5mm} & \hspace{5mm} & \hspace{5mm} & \hspace{5mm} &  \ddots  \\
		};
		\foreach \i [count=\xi from 1] in  {1,...,4}{
		\node also [label=above:{\footnotesize \xi}] (m-1-\i) {};
		\node also [label= { [label distance=3mm] right: {\footnotesize \xi} }] (m-\i-5) {};
		}
		\node also [label=above:{\footnotesize \dots}] (m-1-5) {};
		\foreach \i [count=\xi from 5] in  {5,...,8}{
		\node also [label= { [label distance=3mm] right: {\footnotesize \xi} }] (m-\i-5) {};
		}
		\node also [label= { [label distance=3.5mm] right: {\footnotesize \vdots} }] (m-9-5) {};
		\node also [label= { [label distance=5mm] left: { $(\widetilde{A}_{k\ell})=$ } }] (m-5-1) {};
	\end{tikzpicture}
        \label{eq:Amatrix}
\end{equation*}
where $\aaa, \bbb>0$ are constants such that $\aaa^2 + \bbb^2 = 1$.

\begin{remark}
	It is not evident that the current example is still a dynamical systems example.
	That is, we do not know that there is a map $T: [0,1]^d \to [0,1]^d$ so that
	$f^{n+1} = f^n \circ T$
	and
	$f^{n+1}_k = \sum_{\ell} \widetilde{A}_{k\ell}\, f^{n}_\ell$~.
Moreover, such a map might not be injective, surjective, or unique.
For example, $B^{-1}$ from the previous example is not injective when thought of as a map on $[0,1]$, but it is when we allow it to move through another dimension ($d=2$).
For the present example, taking $\aaa, \bbb = \sqrt{2}/2$, the initial function $f^0(x) = 2\cos \left( 2 \pi x \right)$ is transformed to $f^1(x) = \sqrt{2} ( \aaa \cos \left( 2 \pi x \right) +  \bbb \cos \left( 4 \pi x \right)  )$ after one time step.
Since the range of $f^0$ and $f^1$ are not the same set, any such $T$ cannot be surjective.
Lastly, if $f^n$ is constant, then $T$ can be any map.
These caveats notwithstanding, we consider the current example as a study of possible ways for energy to move down the spectrum and proceed with an analysis.
\end{remark}

We show the coefficients of $f^n$ for initial function $f^0(x) = 2\cos \left( 2 \pi x \right)$ in \cref{coeff_baker}. Heuristically, the energy starts concentrated on the $k=1$ mode and subsequently splits between modes $k=1,2$ so that $L^2$ norm is preserved.
After that the $k=1$ mode continues to donate a proportion $\bbb$  of its energy to $k=2$ and the energy on $k=2$ is transported down the spectrum at the same rate as the baker's map ($k=2^n$).

Notice that $f^n$ is mean-zero because it is the finite sum of cosine functions with full period.
We can compute the $L^2$ norm and find $\norm{f^n}_{L^2} = 1 ~\forall n$.
Hence $A_{k\ell}$ is a unitary map on $\ell^2$ by the polarization identity.
Therefore $f^n$ is bounded uniformly in $L^2$ and so $f^n$ satisfies the hypothesis of the theorems in \cref{Overview}.

\begin{table*} \small
	\ra{1.3}
	\setlength\arrayrulewidth{1.5pt}
		\begin{tabular}{@{}l|p{12mm}p{12mm}p{12mm}p{12mm}p{12mm}p{12mm}p{12mm}p{12mm}p{12mm}p{12mm}  @{}}
			& \footnotesize{$k=1$} & 2 & 3 & 4 & 5 & 6 & 7 & 8 & \dots \\ \hline

			$f^0_k$ & 1 &&&&&&&& \\[3pt]
			\rowcolor[gray]{.9}
			$f^1_k$ & $\aaa$ & $\bbb$ &&&&&&&\\[3pt]
			$f^2_k$ & $\aaa^2$ & $\aaa \bbb$ & & $\bbb$ &&&&&\\[3pt]
			\rowcolor[gray]{.9}
			$f^3_k$ & $\aaa^3$ & $\aaa^2 \bbb$ & & $\aaa \bbb$ & & & & $\bbb$ &\\[3pt]
			\vdots &&&&&&&&
		\end{tabular}%
        \medskip
	\caption{Nonvanishing Fourier coefficients of $f^n$ defined by \cref{eq:Af}, for $f^0(x) = 2\cos \left( {2 \pi} x \right)$.}
	\label{coeff_baker}
\end{table*}

Consider the contribution to the mix-norm from mode $k=1$
\begin{equation*}
	\mathcal{E}_1^n = \norm{P_{k = 1} f^{n}}_{\Sobo^{-\qq}}^2 = \sum\limits_{k = 1} k^{-2 q}|f_k^{n}|^2  = |f_1^{n}|^2 = \aaa^{2n},
\end{equation*}
and compare it to the contributions from modes $k>1$ (a geometric sum):
\begin{align*}
	\mathcal{E}_{k>1}^n &= \norm{P_{k>1} f^{n}}_{\Sobo^{-\qq}}^2 = \sum\limits_{k > 1} k^{-2 q}|f_k^{n}|^2\\
	& =
	\begin{cases}
		c_{q,\aaa} \left(  \aaa^{2n}  - 2^{-2qn} \right), & \text{for } \aaa \neq 2^{-q}; \vspace{2mm}\\
		\bbb^2 ~\aaa^{2n}  n, & \text{for } \aaa = 2^{-q};
	\end{cases}
\end{align*}
where
\begin{equation*}
c_{q,\aaa} = \frac{ b^2} {\aaa^2 2^{2q} -  1 }.
\end{equation*}
For $\aaa >  2^{-q}$, we see that $\mathcal{E}_1^n \sim \mathcal{E}_{k>1}^n$ as $n \to \infty$ and the mode $k = 1$ captures a non-zero proportion of the mix-norm for arbitrarily large $n$.
Therefore $f^n$ is \rec\ for $q > log_2(1/\aaa)$.

For $\aaa \leq  2^{-q}$, $\mathcal{E}_1^n = o \left( \mathcal{E}_{k>1}^n \right)$ so the mode $k = 1$ does not capture a non-zero proportion of the mix-norm for arbitrarily large $n$.
This suggests that $f^n$ is \tran\ for $q \leq log_2(1/\aaa)$.
To prove $q$-transience, we need to show the same holds for an arbitrary finite set $I$.
Take $I = \left[ 0,2^R \right]$ for some $R \in \mathds{N}$.
For $n> R$,
\begin{equation*}
	\norm{P_I f^n}_{\Sobo^{-q}}^2 = c_{ R, q, \aaa}  ~ \aaa^{2n}
\end{equation*}
where
\begin{equation*}
	c_{ R, q, \aaa} =
	\begin{cases}
		1 +  c_{q,\aaa}  \left( 1 - \aaa^{-2R} 2^{-2qR} \right), & \text{for } \aaa \neq 2^{-q}; 	\vspace{2mm}\\
		1 + \bbb^2  R, & \text{for } \aaa = 2^{-q}
\end{cases}
\end{equation*}
and we conclude that
\begin{equation*}
	\lim_{n \to \infty} \frac{\norm{P_I f^n}^2_{\Sobo^{-q}} }{\norm{ f^n}^2_{\Sobo^{-q}}} =
	\begin{cases}
	c_{R, q, \aaa} \left( 1 + c_{q,\aaa} \right)^{-1}  , & \text{for } \aaa > 2^{-q};\\
	0, & \text{for } \aaa \leq 2^{-q}.
	\end{cases}
\end{equation*}
Therefore $f^n$ is \tran\ for $q \leq log_2(1/\aaa)$ and \rec\ for $q > log_2(1/\aaa)$.
\end{Example}

\begin{Example}[altered baker's map action with diffusion]\label{ExDiff}
We use the same matrix $\widetilde{A}_{k\ell}$ and initial condition as in \cref{ExAltBak} but now we also include diffusion.
Without diffusion the conclusion from the previous section was that $f^n$ is \rec\ if $q$ was large enough.
We now show that with diffusion, $f^n$ is \rec\ for all $q$.
Along the way, we show the rate of decay of the Sobolev norm $\norm{f^n}_{H^\beta}$ is independent of $\beta \in \mathds{R}$.

Let
\begin{equation}
	\gamma_k = \exp\l(- \kappa (2 \pi k)^2\r)
\end{equation}
and update the Fourier coefficients according to
\begin{equation}
	f^{n+1}_k = \sum_{\ell} \gamma_k  \widetilde{A}_{k\ell}\, f^{n}_\ell
        \label{eq:DAf}
\end{equation}
where~$\widetilde{A}_{k\ell}$ is the matrix defined in~\cref{eq:Amatrix}.
This matrix multiplication will result in \emph{pulsed diffusion} with diffusion constant $\kappa$.
We display the coefficients of $f^n$ in \cref{coeff_diff}.

\begin{table*} \small
	\ra{1.3}
        \setlength\arrayrulewidth{1.5pt}
	\begin{tabular}{@{}l|p{12mm}p{12mm}p{12mm}p{14mm}p{12mm}p{12mm}p{12mm}p{12mm}p{12mm} @{}}
		& \footnotesize{$k=1$} & 2 & 3 & 4 & 5 & 6 & 7 & 8 & \dots \\ \hline

		$f^0_k$ & 1 &&&&&&&& \\[3pt]
		\rowcolor[gray]{.9}
		$f^1_k$ & $\aaa \gamma_1$ & $\bbb \gamma_2$ &&&&&&&\\[3pt]
		$f^2_k$ & $\aaa^2 \gamma_1^2$ & $\aaa \gamma_1 \bbb \gamma_2$ & & $\bbb \gamma_2 \gamma_4$ &&&&&\\[3pt]
		\rowcolor[gray]{.9}
		$f^3_k$ & $\aaa^3 \gamma_1^3$ & $\aaa^2 \gamma_1^2 \bbb \gamma_2$ & & $\aaa \gamma_1 \bbb \gamma_2 \gamma_4$ & & & & $\bbb \gamma_2 \gamma_4 \gamma_8$ &\\[3pt]
		\vdots &&&&&&&&
	\end{tabular}%
        \medskip
	\caption{Nonvanishing Fourier coefficients of $f^n$ defined by \cref{eq:DAf}, for $f^0(x) = 2\cos \left( {2 \pi} x \right)$.}
	\label{coeff_diff}
\end{table*}

The amount of mix-norm found on mode $k=1$,
\begin{equation}
	\mathcal{E}_1^n = \norm{P_{k = 1} f^{n}}_{\Sobo^{\beta}}^2 = \sum\limits_{k = 1} k^{2 \beta}|f_k^{n}|^2  = |f_1^{n}|^2 = (\aaa \gamma_1)^{2n},
\end{equation}
is asymptotically equivalent to the amount found on modes $k>1$ because
\begin{align*}
	\mathcal{E}_{k>1}^n &= \norm{P_{k>1} f^{n}}_{\Sobo^{\beta}}^2 = \sum\limits_{k > 1} k^{2 \beta}|f_k^{n}|^2\\
	& =  \sum_{\ell = 1}^n (2^\ell)^{2 \beta} \left( (\aaa \gamma_1)^{n - \ell} b ~ \Pi_{s=1}^\ell \gamma_{2^s} \right)^2\\
	& = (\aaa \gamma_1)^{ 2n } \sum_{\ell = 1}^n (2^\ell)^{2 \beta} \left( (\aaa \gamma_1)^{ - \ell} b ~ \Pi_{s=1}^\ell \gamma_{2^s} \right)^2
\end{align*}
where the factor
\begin{equation}
	c_{n, \beta, \aaa, \kappa} := \sum_{\ell = 1}^n (2^\ell)^{2 \beta} \left( (\aaa \gamma_1)^{ - \ell} b ~ \Pi_{s=1}^\ell \gamma_{2^s} \right)^2
\end{equation}
limits to a finite constant $c_{ \beta, \aaa, \kappa}$ as $n \to \infty$.
We see that $c_{n, \beta, \aaa, \kappa}$ converges since the factor
\begin{equation}
	\gamma_{2^\ell} = \exp\l(-  \kappa (2\pi 2^\ell)^2    \r)
\end{equation}
dominates the terms in the sum to render the sum convergent.
Lastly, notice that $c_{ \beta, \aaa, \kappa} \to \infty$ as $\beta \to \infty$ or $\aaa \to 0$.
We conclude that $f^n$ is \rec\ for all $q \in \mathds{R}$ and, moreover, that all of the Sobolev norms decay at the same rate.
\end{Example}

\bwo{JL To Do: Please include a one line summary of what the sine flow is. Also, Charlie asked if the `small amount of diffusion' is actual diffusion with constant $\kappa$ or numerical diffusion?
}
\begin{Example}[sine flow]
 Lastly we consider a computational example, the random sine flow, which is a simple model flow that is empirically quite efficient at mixing \citep{Pierrehumbert1994,Thiffeault2004}.
 The sine flow is a two-dimensional time-periodic flow with a full period consisting of the shear flow
 \begin{subequations}
   \begin{align}
   \bm{u}_1(t,x) = \sqrt{2}\,(0\,,\,\sin(2\pi x + \psi_1)),
  \qquad
  0 \le t < 1/2,
\intertext{followed by}
  \bm{u}_2(t,y) = \sqrt{2}\,(\sin(2\pi y + \psi_2)\,,\,0),
  \qquad
  1/2 \le t < 1,
\end{align}
\label{eq:sineflow}%
\end{subequations}
with~$(x,y) \in [0,1]^2$ and periodic spatial boundary conditions.
Here~$\psi_1$ and~$\psi_2$ are random phases, uniformly distributed in~$[0,2\pi]$, chosen independently at every period.
Unlike the pulsed diffusion in \cref{ExDiff}, diffusion acts continuously by solving the advection--diffusion equation~\eqref{advDiffEqn} with diffusivity~$D = 10^{-5}$.
We display $\norm{f^t}_{H^{{-q}}}$ for various $q$ in \cref{fig:numerical}, for initial condition~$f^0(x) = \sqrt{2}\cos(2\pi x)$, and observe that the mix-norms all decay at the same rate, at least within numerical fluctuations.
\begin{figure}
	\begin{center}
		\includegraphics[width=.7\textwidth]{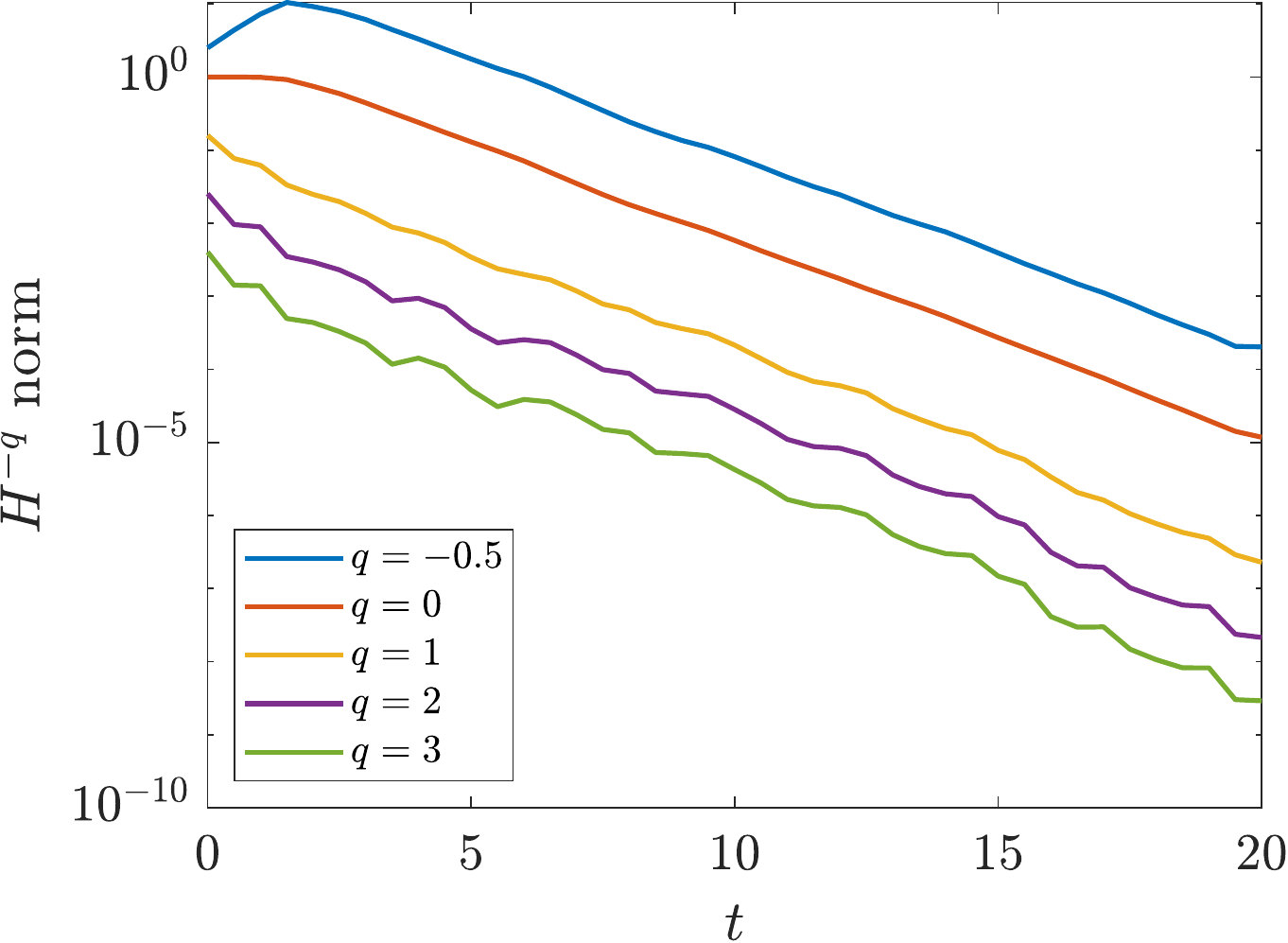}
	\end{center}
	\caption{For the advection-diffusion equation~\eqref{advDiffEqn} with~$\bm{u}$ given by the random sine flow~\eqref{eq:sineflow}, the rate of decay of the mix-norms is independent of $q$.  The the initial condition is~$f^0(x) = \sqrt{2}\cos(2\pi x)$, and the diffusivity is~$D = 10^{-5}$.}
	\label{fig:numerical}
\end{figure}
\end{Example}

In general, if $f^t$ is \rec\ then the decay rate of the mix-norm is independent of $q$ in the following sense:
\begin{theorem}
	If $f^t$ is \rec, then it is also $q'$-recurrent for any $q' > q$. Moreover, we have
	\begin{equation*}
		\limsup_{t \to \infty} \frac{\norm{f^t}_{\Sobo^{-q'}} }{\norm{f^t}_{\Sobo^{-q}}} > 0\,.
	\end{equation*}
	Then together with the trivial estimate
	\begin{equation}\label{independent:helper}
		\norm{f^t}_{\Sobo^{-q'}} \leq \norm{f^t}_{\Sobo^{-q}}
	\end{equation}
	we conclude that $\norm{f^t}_{\Sobo^{-q'}}$ is \BigOhText\ but not \littleOhText\ of $\norm{f^t}_{\Sobo^{-q}}$.
\end{theorem}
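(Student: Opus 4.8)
The plan is to argue entirely on the Fourier side and to exploit the fact that a \emph{finite} frequency set carries only bounded wavenumbers, on which the $\Sobo^{-q}$ and $\Sobo^{-q'}$ norms differ by only a fixed multiplicative constant, while on the full lattice the norms are ordered monotonically in the exponent.

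First I would fix a finite set $I \subset \mathds{Z}^\sdim$ witnessing $q$-recurrence in the sense of \cref{recurrent}, discarding $\bm{0}$ from it (harmless since $f^t$ is mean-zero), so that every $\kv \in I$ satisfies $1 \le k \le N$ with $N \ldef \max_{\kv \in I} k$. For $q' > q$, writing $k^{-2q'} = k^{-2q} k^{-2(q'-q)}$ and using $1 \le k \le N$ together with the fact that $x \mapsto x^{-2(q'-q)}$ is decreasing gives $N^{-2(q'-q)} k^{-2q} \le k^{-2q'} \le k^{-2q}$ for $\kv \in I$. Multiplying by $|f_\kv^t|^2$ and summing over $\kv \in I$ yields the two-sided bound $N^{-(q'-q)} \norm{P_I f^t}_{\Sobo^{-q}} \le \norm{P_I f^t}_{\Sobo^{-q'}} \le \norm{P_I f^t}_{\Sobo^{-q}}$. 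Separately, since $k \ge 1$ on every nonzero mode, the global monotonicity $\norm{f^t}_{\Sobo^{-q'}} \le \norm{f^t}_{\Sobo^{-q}}$ holds; this is exactly the trivial estimate~\eqref{independent:helper}.

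Next I would chain $\norm{f^t}_{\Sobo^{-q'}} \ge \norm{P_I f^t}_{\Sobo^{-q'}} \ge N^{-(q'-q)} \norm{P_I f^t}_{\Sobo^{-q}}$, divide through by $\norm{f^t}_{\Sobo^{-q}} > 0$, and take $\limsup_{t \to \infty}$; since multiplication by the fixed positive constant $N^{-(q'-q)}$ commutes with $\limsup$, the right-hand side becomes $N^{-(q'-q)}$ times the positive quantity in~\eqref{dfn:qrec:limsup}, which establishes the ``moreover'' inequality. For $q'$-recurrence itself I would keep the same set $I$ and estimate $\norm{P_I f^t}_{\Sobo^{-q'}} / \norm{f^t}_{\Sobo^{-q'}} \ge N^{-(q'-q)} \norm{P_I f^t}_{\Sobo^{-q}} / \norm{f^t}_{\Sobo^{-q'}} \ge N^{-(q'-q)} \norm{P_I f^t}_{\Sobo^{-q}} / \norm{f^t}_{\Sobo^{-q}}$, the last step using~\eqref{independent:helper} in the denominator; a $\limsup$ then shows the ratio of \cref{recurrent} at exponent $q'$ is positive. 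Finally, combining the ``moreover'' bound with~\eqref{independent:helper} gives the \BigOhText\ but not \littleOhText\ conclusion directly.

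There is no substantive obstacle; the only point requiring care is the \emph{direction} of the two comparisons. On the finite set $I$ the inequality with the smaller exponent runs the ``wrong'' way and must be paid for with the constant $N^{-(q'-q)}$, whereas on the full lattice the monotonicity $\norm{\cdot}_{\Sobo^{-q'}} \le \norm{\cdot}_{\Sobo^{-q}}$ is used precisely to control the denominator $\norm{f^t}_{\Sobo^{-q'}}$ from above. Keeping these two uses separate is all the bookkeeping the proof needs.
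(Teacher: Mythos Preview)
Your proposal is correct and follows essentially the same approach as the paper: both arguments use the finite witnessing set $I$, compare the $\Sobo^{-q}$ and $\Sobo^{-q'}$ norms on $I$ via the bound $\norm{P_I f^t}_{\Sobo^{-q}} \le N^{q'-q}\norm{P_I f^t}_{\Sobo^{-q'}}$ (the paper writes this with $R$ in place of your $N$), and combine this with the global monotonicity~\eqref{independent:helper} to obtain both $q'$-recurrence and the ``moreover'' inequality. Your write-up is slightly more explicit about the two directions of comparison, but the logical structure is identical.
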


\begin{proof}
	Since $f^t$ is \rec, there is a finite set $I$ such that
	\begin{equation}\label{independent:helper1}
		0 < \limsup_{t \to \infty} \frac{\norm{P_I f^t}_{\Sobo^{-q}} }{\norm{ f^t}_{\Sobo^{-q}}}\,.
	\end{equation}
	Say $I \subset [ -R, R]$ for some $R \in \mathds{N}$. Then
	\begin{equation}\label{independent:helper2}
		\norm{P_I f^t}_{\Sobo^{-q}} \leq R^{(q'-q)} \norm{P_I f^t}_{\Sobo^{-q'}}\,.
	\end{equation}
	Putting together \cref{independent:helper,independent:helper1,independent:helper2} we obtain
	\begin{equation*}
		0 < R^{(q'-q)} \limsup \frac{\norm{P_I f^t}_{\Sobo^{-q'}} }{\norm{ f^t}_{\Sobo^{-q'}}}.
	\end{equation*}
	We conclude $f^t$ is $q'$-recurrent. Moreover, the trivial estimate $\norm{P_I f^t}_{\Sobo^{-q'}} \leq \norm{ f^t}_{\Sobo^{-q'}}$ together with \cref{independent:helper1,independent:helper2} imply

	\begin{equation}
		0 < R^{(q'-q)} \limsup \frac{\norm{ f^t}_{\Sobo^{-q'}} }{\norm{ f^t}_{\Sobo^{-q}}}.
	\end{equation}

\end{proof}

One question for further investigation is whether a converse to the above theorem exists.
That is, can we conclude $f^t$ is \rec\ for a range of $q$ if the mix-norms decay at the same rate for the those $q$?
Another question concerns the transition from \tran\ to \rec\ when including pulsed diffusion.
Does $q$-recurrence imply an introduction of the Batchelor scale and anomalous dissipation \cite{Bedrossian_preprint_4,Miles2018b,Miles2018}?

\section{Proof of Theorem 1}\label{proofs}

We begin by generalizing the definition of \rec\ functions to the notion of `\typeone\~' functions.

\begin{definition}\label{def:qhrec}
	For positive functions $h(t)$, we say $f^t$ is \typeone\ if there exists a finite set $I \subset \mathds{Z}^d$ such that
	\begin{equation}
	\limsup_{t \to \infty} \frac{\norm{P_I f^t}_{\Sobo^{-\qq}} }{h} > 0.
	\end{equation}
	Functions that are not \typeone\ are called \typetwo.
\end{definition}

\begin{lemma}
	\label{case1}
	If $f^t$ is \typeone , then there is a function $g \in \Hq$ such that
	\begin{equation*}
	\limsup_{t \to \infty} \frac{\l| \pair{f^t}{g} \r|}{h} > 0 .
	\end{equation*}
	Moreover, $g \in \Sobo^{\beta}$ for any $\beta \in \mathds{R}$ with
	\begin{equation}
		\norm{g}_{\Sobo^{\beta} }^2 = 2 \sum\limits_{\kv \in I} k^{ 2(\beta - q) }  .
	\end{equation}
\end{lemma}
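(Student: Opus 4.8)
The plan is to turn the defining $\limsup$ into a quantitative estimate along a sequence of times, use the finiteness of $I$ to pass to a subsequence along which the Fourier data of $f^t$ on $I$ stabilizes in direction, and read off $g$ from the limiting direction. Unfolding \cref{def:qhrec}, the \typeone\ hypothesis supplies a constant $\delta>0$ and times $t_m\to\infty$ with $\mixnorm{P_I f^{t_m}}\ge\delta\,h(t_m)$, so in particular $P_I f^{t_m}\neq 0$ for every $m$; discarding the zero mode (which carries no mean-zero mass) we may assume $\bm{0}\notin I$. Record the data as the finite vectors $v^m:=\bigl(k^{-\qq}f^{t_m}_\kv\bigr)_{\kv\in I}\in\mathds{C}^{I}$, so that $\norm{v^m}_{\ell^2(I)}=\mixnorm{P_I f^{t_m}}\ge\delta\,h(t_m)>0$, and normalize $\widehat{v}^m:=v^m/\norm{v^m}_{\ell^2(I)}$. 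As $\mathds{C}^I$ is finite-dimensional its unit sphere is compact, so I pass to a subsequence (not relabeled) along which $\widehat{v}^m\to\widehat{v}^{*}$, again a unit vector, and set
\begin{equation*}
  g_\kv := \begin{cases}
    \sqrt{2}\,k^{-\qq}\,\widehat{v}^{*}_\kv/|\widehat{v}^{*}_\kv|, & \kv\in I,\ \widehat{v}^{*}_\kv\neq 0;\\
    \sqrt{2}\,k^{-\qq}, & \kv\in I,\ \widehat{v}^{*}_\kv = 0;\\
    0, & \text{otherwise,}
  \end{cases}
\end{equation*}
so that $|g_\kv|=\sqrt{2}\,k^{-\qq}$ on $I$ and $g$ has finite Fourier support.

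The correlation bound is then a short computation. Writing $f^{t_m}_\kv=k^{\qq}\norm{v^m}_{\ell^2(I)}\widehat{v}^m_\kv$ on $I$, Plancherel's theorem gives
\begin{equation*}
  \pair{f^{t_m}}{g}=\sqrt{2}\,\norm{v^m}_{\ell^2(I)}\,\sigma_m,\qquad
  \sigma_m:=\sum_{\substack{\kv\in I\\\widehat{v}^{*}_\kv\ne0}}\widehat{v}^m_\kv\,\frac{\overline{\widehat{v}^{*}_\kv}}{|\widehat{v}^{*}_\kv|}\;+\sum_{\substack{\kv\in I\\\widehat{v}^{*}_\kv=0}}\widehat{v}^m_\kv .
\end{equation*}
Along the subsequence $\widehat{v}^m\to\widehat{v}^{*}$, so $\sigma_m\to\sum_{\kv\in I}|\widehat{v}^{*}_\kv|=\norm{\widehat{v}^{*}}_{\ell^1(I)}\ge\norm{\widehat{v}^{*}}_{\ell^2(I)}=1$; hence $|\sigma_m|\ge\tfrac12$ for all large $m$, and therefore
\begin{equation*}
  \l|\pair{f^{t_m}}{g}\r|\;\ge\;\tfrac{1}{\sqrt{2}}\,\norm{v^m}_{\ell^2(I)}\;=\;\tfrac{1}{\sqrt{2}}\,\mixnorm{P_I f^{t_m}}\;\ge\;\tfrac{\delta}{\sqrt{2}}\,h(t_m).
\end{equation*}
Since $t_m\to\infty$ this gives $\limsup_{t\to\infty}\l|\pair{f^t}{g}\r|/h(t)\ge\delta/\sqrt{2}>0$.

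Finally, the ``moreover'' is read off directly: $|g_\kv|^2=2k^{-2\qq}$ for $\kv\in I$ and $g_\kv=0$ otherwise, so $\norm{g}_{\Sobo^{\beta}}^2=\sum_\kv k^{2\beta}|g_\kv|^2=2\sum_{\kv\in I}k^{2(\beta-\qq)}$, a finite sum; hence $g\in\Sobo^\beta$ for every $\beta\in\mathds{R}$, in particular $g\in\Hq$. The only step carrying real content is the compactness extraction in the middle: a single $g$ must serve all times while the profile $(f^t_\kv)_{\kv\in I}$ keeps moving, and finiteness of $I$ is exactly what converts ``choose $g$ aligned with $f^{t_m}$'' into an honest limit. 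If one wants the observable $g$ to be real-valued, as in the applications, one symmetrizes $I$ to $I\cup(-I)$ and takes conjugate-symmetric coefficients; the argument is unchanged, and the factor $2$ in the norm identity then records the pairing of $\kv$ with $-\kv$.
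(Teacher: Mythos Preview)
Your proof is correct, and the overall architecture matches the paper's: exploit finiteness of $I$ to pass to a subsequence along which the direction of $(k^{-q}f^{t_m}_\kv)_{\kv\in I}$ stabilizes, choose $g$ with $|g_\kv|=\sqrt{2}\,k^{-q}$ on $I$ aligned with that direction, and close with $\ell^1\ge\ell^2$. The difference is in how the stabilization is obtained and how coarsely $g$ is aligned. The paper uses a pigeonhole argument: writing $f^{t_m}_\kv=a^{t_m}_\kv+ib^{t_m}_\kv$, there are only $4^{|I|}$ sign patterns for $(\sgn a^{t_m}_\kv,\sgn b^{t_m}_\kv)_{\kv\in I}$, so one pattern $\{(c_\kv,d_\kv)\}$ recurs infinitely often, and one takes $g_\kv=(c_\kv+id_\kv)k^{-q}$. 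This yields $\mathrm{Re}\,\langle f^{t_{m_\ell}},g\rangle=\sum_{\kv\in I}(|a_\kv|+|b_\kv|)k^{-q}\ge\sum_{\kv\in I}|f_\kv|k^{-q}$ exactly, with no asymptotic loss. Your route replaces pigeonhole by compactness of the unit sphere in $\mathds{C}^I$ and aligns $g$ with the exact limiting phase; this is arguably cleaner and more conceptual, at the cost of introducing a limit (so the constant is $\delta/\sqrt{2}$ rather than the paper's $c$) and of needing the case split on $\widehat v^*_\kv=0$. Both constructions produce $|g_\kv|^2=2k^{-2q}$ on $I$, so the ``moreover'' norm identity comes out identically.
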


\begin{proof}
There exists a constant $\constant > 0$ and a sequence of times $t_m \to \infty$ where
\begin{equation}\label{eq1}
\sum\limits_{\kv \in I} |f_\kv^{t_m}|^2\, k^{-2 q}
\geq \constant^2 \rate^2(t_m).
\end{equation}
Recall the signum function
\begin{equation}
	\sgn x =
	\left\{
	\begin{array}{ll}
		\phantom{-}1, \quad & x \geq 0; \\
		-1, \quad & x < 0.
	\end{array}
	\right.
\end{equation}
Now notice that for each fixed time $t_m$, $\{ f^{t_m}_\kv \}_{\kv \in I}$ is a list of $|I|$ numbers in $\mathds{C}$.  Write $f^{t_m}_\kv = a_\kv^{t_m} + i b_\kv^{t_m} $ where $a_\kv^{t_m}$ and $ b_\kv^{t_m}$ are real. Then $ f^{t_m}_\kv$ is found in one of the four quadrants of the complex plane, depending on the two possibilities for $\sgn a_\kv^{t_m}$ and two possibilities for $\sgn b_\kv^{t_m}$. Thus, $\{ f^{t_m}_\kv \}_{\kv \in I}$ has $4^{|I|}$ possible states. Since we have an infinite sequence of times $t_m$, one of these states must occur infinitely many times. By taking a subsequence $t_{m_{\ell}}$, we can ensure  $\{ f^{t_{m_{\ell}} }_\kv \}_{\kv \in I}$ is the same state for all $\ell$. Let $\{ (c_\kv, d_\kv) \}_{\kv\in I}$ encode this state, meaning that~$c_\kv = \sgn a_\kv^{t_{m_{\ell} } }$ and~$d_\kv = \sgn b_\kv^{t_{m_{\ell} } }$ for all $\ell$. We see that $a_\kv^{t_{m_{\ell} } } c_\kv = \bigl|a_\kv^{t_{m_{\ell} } } \bigr|  $ and $b_\kv^{t_{m_{\ell} } } d_\kv = \bigl|b_\kv^{t_{m_{\ell} } } \bigr|  $ for all $\ell$.
Let
\begin{equation}\label{test function}
	g_\kv =
	\left\{
	\begin{array}{ll}
		\left(c_\kv + i d_\kv \right) k^{-q},\quad & \kv \in I;  \\
		0, \quad & \text{otherwise}.
		\end{array}
		\right.
\end{equation}
Notice that $g \in \Sobo^{\beta}$ because
\begin{equation}
\norm{g}_{\Sobo^{\beta}}^2 = \sum |g_\kv|^2\, k^{2\beta} = \sum\limits_{\kv \in I} (|c_\kv|^2 + |d_\kv|^2) k^{-2q} k^{ 2 \beta} = 2 \sum\limits_{\kv \in I} k^{ 2(\beta - q) } < \infty
\end{equation}
since $I$ is a finite set. We have
\begin{align*}
	\sum f_\kv^{t_{m_{\ell}}}~ \bar{g}_\kv  &= \sum\limits_{\kv \in I} \left(a_\kv^{t_{m_{\ell} } } + i b_\kv^{t_{m_{\ell} } }\right) \left(c_\kv - i d_\kv \right) ~k^{-q} \\
	&=\sum\limits_{\kv \in I} \left( a_\kv^{t_{m_{\ell} } } c_\kv  +  b_\kv^{t_{m_{\ell} } } d_\kv + i \left(b_\kv^{t_{m_{\ell} } } c_\kv - a_\kv^{t_{m_{\ell} } } d_\kv \right)  \right) ~k^{-q}\\
	&=\sum\limits_{\kv \in I} \left( \bigl|a_\kv^{t_{m_{\ell} } } \bigr|  +  \bigl|b_\kv^{t_{m_{\ell} } } \bigr| + i \left(b_\kv^{t_{m_{\ell} } } c_\kv - a_\kv^{t_{m_{\ell} } } d_\kv \right)  \right) ~k^{-q}.
\end{align*}
We conclude that
\begin{align*}
	\left| \pair{f^{t_{m_{\ell}}}}{ g} \right| &\geq \mathrm{Re} \sum f_\kv^{t_{m_{\ell}}}~ \bar{g}_\kv \\
	&=\sum\limits_{\kv \in I} \left( \bigl|a_\kv^{t_{m_{\ell} } } \bigr|  +  \bigl|b_\kv^{t_{m_{\ell} } } \bigr|   \right) ~ k^{-q}\\
	&\geq  \sum\limits_{\kv \in I}  \sqrt[]{ \bigl|a_\kv^{t_{m_{\ell} } } \bigr|^2  +  \bigl|b_\kv^{t_{m_{\ell} } } \bigr|^2  } ~ k^{-q}\\
	&=\sum\limits_{\kv \in I}\left|f_\kv^{t_{m_{\ell} } } \right|~ k^{-q},
\end{align*}
as desired.  Lastly, we use dominance of~$\ell^2$ by~$\ell^1$ together with \cref{eq1} to conclude $ \left| \pair{f^{t_{m_\ell}}}{ g} \right| \geq \constant\, \rate(t_{m_\ell})$, $\forall m_\ell$.
\end{proof}

Lemma \ref{case1} characterizes the behavior of \typeone\ functions.
We now develop the tool we need to further analyze \typetwo\ functions.

\begin{lemma}\label{intervals}
	Let $f^t$ be \typetwo\ for some positive~$h=\BigOh{ \mixnorm{f^t} }$. For any~$\delta$ with $0 < \delta < 1$, there exist sets  $I_i = \{ \kv \st J_{i-1} < |\kv| \leq J_i  \}$ and a sequence of times $T_i \to \infty$ satisfying the following:

	\begin{property}\label{intervals:property:capturesFourierMass}
		The set $I_i$ captures a significant proportion of the Fourier energy at time $T_i$, so that
		\begin{equation}\label{intervals:capturesFourierMass}
			\sum\limits_{\kv \in I_i} \left|f_\kv^{T_i} \right|^2\, k^{-2 q} \geq (1-\delta)  \norm{f^{T_i}}_{\Sobo^{-\qq}}^2\,.
		\end{equation}
	\end{property}

	\begin{property}\label{intervals:property:FourierMassDoesNotReturn}
		Enough of the Fourier energy does not return to lower frequency modes, so that
		\begin{equation}\label{intervals:FourierMassDoesNotReturn}
			\sum\limits_{|\kv| \leq J_{i-1} } \left|f_\kv^{t} \right|^2\, k^{-2 q} \leq \delta\,\rate^2(t)  \text{ for all } t \geq T_i\,.
		\end{equation}
	\end{property}
\end{lemma}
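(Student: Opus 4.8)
The plan is to build the sets $I_i$ and times $T_i$ by an induction that alternates between choosing a new cutoff and a new time, in the order $J_0, T_1, J_1, T_2, J_2,\dots$, with each $T_i$ determined by the previously chosen cutoff $J_{i-1}$ and each $J_i$ determined by $T_i$. I would start at $J_0 = 0$ and (by convention) $T_0 = 0$, so that $\{\kv : |\kv| \le J_0\}$ reduces to $\{\kv = \bm{0}\}$, where $f^t$ vanishes; then \cref{intervals:FourierMassDoesNotReturn} holds vacuously at the base, and the $I_i$ produced will be genuine disjoint annuli since the $J_i$ come out strictly increasing.

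The only real input is a restatement of the hypothesis. Unpacking \cref{def:qhrec}, the assumption that $f^t$ is \typetwo\ means $\limsup_{t\to\infty}\mixnorm{P_I f^t}/\rate(t) = 0$, i.e.\ $\sum_{\kv \in I}|f_\kv^t|^2 k^{-2q} = \littleOh{\rate^2(t)}$, for \emph{every} finite $I \subset \mathds{Z}^d$; and since $\rate = \BigOh{\mixnorm{f^t}}$, that same quantity is also $\littleOh{\mixnorm{f^t}^2}$. For the inductive step I would apply this to the finite set $I = \{\kv : |\kv| \le J_{i-1}\}$ to get a time $S_i$ such that, for all $t \ge S_i$, both $\sum_{|\kv| \le J_{i-1}}|f_\kv^t|^2 k^{-2q} \le \delta\,\rate^2(t)$ and $\sum_{|\kv| \le J_{i-1}}|f_\kv^t|^2 k^{-2q} \le (\delta/2)\,\mixnorm{f^t}^2$. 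Then I would set $T_i := \max\{S_i,\, T_{i-1}+1\}$: the first of these two bounds, holding for \emph{all} $t \ge T_i$, is precisely \cref{intervals:FourierMassDoesNotReturn}, and $T_i > T_{i-1}$ guarantees $T_i \to \infty$.

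With $T_i$ now fixed, only $J_i$ remains to be chosen, and here I would use nothing but convergence of the series $\mixnorm{f^{T_i}}^2 = \sum_{\kv}|f_\kv^{T_i}|^2 k^{-2q}$ (finite, and strictly positive since $\mixnorm{f^t}>0$ for all $t$): pick $J_i > J_{i-1}$ large enough that the high tail satisfies $\sum_{|\kv| > J_i}|f_\kv^{T_i}|^2 k^{-2q} \le (\delta/2)\,\mixnorm{f^{T_i}}^2$. Writing the Fourier energy on $I_i = \{\kv : J_{i-1} < |\kv| \le J_i\}$ as $\mixnorm{f^{T_i}}^2$ minus the low tail (which is $\le (\delta/2)\,\mixnorm{f^{T_i}}^2$ because $T_i \ge S_i$) minus the high tail, we obtain
\begin{equation*}
\sum_{\kv \in I_i}|f_\kv^{T_i}|^2 k^{-2q} \;\ge\; (1-\delta)\,\mixnorm{f^{T_i}}^2 ,
\end{equation*}
which is \cref{intervals:capturesFourierMass}. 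This closes the induction.

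The step I expect to be the crux — everything else being bookkeeping — is the quantifier order in \cref{intervals:property:FourierMassDoesNotReturn}: that bound must hold at \emph{every} $t \ge T_i$, not merely at $t = T_i$, and this is exactly what an honest $\littleOh{\rate}$ decay of $\mixnorm{P_I f^t}$ supplies, namely a threshold time past which the Fourier energy below $J_{i-1}$ stays under $\delta\,\rate^2(t)$; smallness only along some subsequence would be useless here. The remainder is soft: once $J_{i-1}$ pins down $T_i$, the cutoff $J_i$ is chosen at the single fixed time $T_i$ from convergence of the Fourier series alone, and the hypothesis $\rate = \BigOh{\mixnorm{f^t}}$ is needed only to convert $\littleOh{\rate}$-smallness into $\littleOh{\mixnorm{f^t}}$-smallness, which is the form in which \cref{intervals:property:capturesFourierMass} is stated.
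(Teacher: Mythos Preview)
Your proof is correct and follows essentially the same inductive scheme as the paper: use $(q,h)$-transience on the finite set $\{|\kv|\le J_{i-1}\}$ to choose $T_i$ so that \cref{intervals:property:FourierMassDoesNotReturn} holds for all $t\ge T_i$, then use convergence of the Fourier series at the fixed time $T_i$ to choose $J_i$ for \cref{intervals:property:capturesFourierMass}. Your $\delta/2+\delta/2$ splitting and the vacuous base case at $J_0=0$ are slightly cleaner than the paper's version (which starts at $J_0=-1$, $T_1=0$ and handles the low-tail bound via a strict inequality), but the argument is the same in substance.
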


\begin{proof}
\proofsubpart{Base case} $i = 1$.  Since (by definition of absolute convergence)
\begin{equation}
	\lim\limits_{J \to \infty} \sum\limits_{|\kv| \leq J} \left|f_\kv^0 \right|^2\, k^{-2 q} =\norm{f^0}_{\Sobo^{-\qq} }^2
\end{equation}
there is a~$J_1$ such that
\begin{equation}
	\sum\limits_{|\kv| \leq J_1} \left|f_\kv^0 \right|^2\, k^{-2 q} \geq (1-\delta) \norm{f^0}_{\Sobo^{-\qq} }^2\,.
\end{equation}
We see $I_1 = \{ \kv \st J_{0} < |\kv| \leq J_1  \}$  where $J_0 = -1$ and $T_1 = 0$ therefore trivially satisfy \cref{intervals:property:capturesFourierMass,intervals:property:FourierMassDoesNotReturn}.

\proofsubpart{Induction step} Suppose that we are given $J_{i-2}, J_{i-1}$ and $T_{i-1}$ that satisfy \cref{intervals:property:capturesFourierMass,intervals:property:FourierMassDoesNotReturn}. Since $h=\BigOh{ \mixnorm{f^t} }$, there is a $c>0$ and a time $T$ so that $h(t) \leq c \mixnorm{f^{t}} ~ \forall t \geq T$. Recall that $f^t$ is \typetwo\ and so there does not exist a sequence $t_m \to \infty$ where
\begin{equation}
	\sum\limits_{|\kv| \leq J_{i-1}} \left|f_\kv^{t_m} \right|^2\, k^{-2 q}
	\geq \min( \delta, \delta/c^2) ~\rate^2(t_m), \quad\forall m.
\end{equation}
We conclude that there exist $T_i$ with~$T_i \geq T_{i-1} + 1$ and $T_i \geq T$ such that \cref{intervals:property:FourierMassDoesNotReturn} is satisfied. In particular, we have
\begin{equation}
	\sum\limits_{|\kv| \leq J_{i-1}} \left|f_\kv^{T_i} \right|^2\, k^{-2 q} < \delta /c^2 ~ \rate^2(T_i)
	\leq \delta  \norm{f^{T_i}}_{\Sobo^{-\qq}}^2\,.
\end{equation}
Hence,
\begin{equation}
	\sum\limits_{|\kv| > J_{i-1}} \left|f_\kv^{T_i} \right|^2\, k^{-2 q} > ( 1 - \delta)  \norm{f^{T_i}}_{\Sobo^{-\qq}}^2,
\end{equation}
and it follows that there is a $J_i$ large enough so that for $I_i = \{ \kv \st J_{i-1} < |\kv| < J_i  \}$ we have
\begin{equation}
	\sum\limits_{\kv \in I_i} |f_\kv^{T_i}|^2\, k^{-2 q} \geq ( 1 - \delta)  \norm{f^{T_i}}_{\Sobo^{-\qq}}^2\,
\end{equation}
which is \cref{intervals:property:capturesFourierMass}.
\end{proof}

Having developed all of the tools we will need, we now prove \cref{classify} and, in the next section, \cref{maintheoremLabel}.

\theoremOne*

\begin{proof}
The forward direction is a special case of \cref{case1} with $h(t) = \mixnorm{f^t}$. We assume $f^t$ is \tran\ and show $ \pair{f^t}{ g} = \littleOh{ \mixnorm{f^t} }$ for all $ g \in \Hq$.
We already know $ \pair{f^t}{ g} = \BigOh{ \mixnorm{f^t} }$ for all $g \in \Hq$, so
\begin{equation}
	\limsup_{t \to \infty} \frac{ \left| \pair{f^t}{ g} \right|}{\mixnorm{f^t}} < \infty, \quad\forall g \in \Hq.
\end{equation}
Seeking a contradiction, we suppose there exists a $g \in \Hq$ such that
\begin{equation}
	\limsup_{t \to \infty} \frac{\left| \pair{f^t}{ g} \right|}{\mixnorm{f^t}} = C > 0 .
\end{equation}
There is a sequence $t_n \to \infty$ such that
\begin{equation}\label{lowerbound}
	\left| \pair{f^{t_n}}{g} \right| \geq \frac{C}{2} \mixnorm{f^{t_n}}, \quad\forall n.
\end{equation}
We will show that \cref{lowerbound} implies that the Fourier coefficients of $g$ decay too slowly for $g$ to be in $\Hq$.

Since $g \in \Hq$, we can choose $\delta$ small enough that $\frac{C}{2} - 2\sqrt{\delta} \norm{g}_{\Hq} = C_0 > 0$. Applying \cref{intervals} with $\rate(t) = \mixnorm{f^t}$, there exist sets  $I_i = \{ \kv \st J_{i-1} < |\kv| \leq J_i  \}$ and a sequence of times $T_i \to \infty$ (without loss of generality, say that $\{ T_i \}$ is a subsequence of $\{ t_n \}$ above) such that we have \cref{intervals:property:capturesFourierMass,intervals:property:FourierMassDoesNotReturn}.
\Cref{intervals:property:capturesFourierMass} implies
\begin{equation}\label{e2}
	\sum\limits_{|\kv| > J_{i} } \l|f_\kv^{T_i} \r|^2\, k^{-2 q} \leq \delta \mixnorm{f^{T_i}}^{2}\,.
\end{equation}
Note that
\begin{align}
	\pair{f^{T_i}}{g} &= \sum_{j \geq 1} \sum_{\kv \in I_j} f_\kv^{T_i} ~ \bar{g}_\kv \nonumber \\
	&=  \sum_{\kv \in I_i} f_\kv^{T_i} ~ \bar{g}_\kv + E
\end{align}
where
\begin{equation}
	E = \sum_{j \neq i} \sum_{\kv \in I_j} f_\kv^{T_i} ~ \bar{g}_\kv = \sum_{j < i} \sum_{\kv \in I_j} f_\kv^{T_i} ~ \bar{g}_\kv +\sum_{j > i} \sum_{\kv \in I_j} f_\kv^{T_i} ~ \bar{g}_\kv =: E_1 + E_2\,.
\end{equation}
By the Cauchy--Schwarz inequality,
\begin{align}
	|E_1| &\leq \left( \sum_{j < i} \sum_{\kv \in I_j} \l| f_\kv^{T_i} \r|^2\, k^{-2q} \right)^{\efrac{1}{2}} \left( \sum_{j < i} \sum_{\kv \in I_j}  \l| g_\kv \r|^2\, k^{2q} \right)^{\efrac{1}{2}}.
\end{align}
Applying \cref{intervals:property:FourierMassDoesNotReturn} (with $t = T_i$), we have
\begin{equation}
	|E_1| \leq \sqrt{\delta} \mixnorm{f^{T_i}} \norm{g}_{\Hq}\,.
\end{equation}
Similarly, by Cauchy--Schwarz and \cref{e2}, we have
\begin{equation}
	|E_2| \leq \sqrt{\delta} \mixnorm{f^{T_i}} \norm{g}_{\Hq}
\end{equation}
and therefore
\begin{equation}\label{boundingerror}
	\left| \pair{f^{T_i}}{g}  \right| \leq \left| \sum_{\kv \in I_i} f_\kv^{T_i}~ \bar{g}_\kv \right| + 2\sqrt{\delta}  \mixnorm{f^{T_i}} \norm{g}_{\Hq}\,.
\end{equation}
Putting together \cref{lowerbound,boundingerror}, we have \begin{equation}\label{step}
	\left( \frac{C}{2} - 2\sqrt{\delta} \norm{g}_{\Hq} \right) \mixnorm{f^{T_i}}  \leq  \left| \sum_{\kv \in I_i} f_\kv^{T_i}~ \bar{g}_\kv \right|.
\end{equation}
Applying Cauchy--Schwarz to the right-hand side of \cref{step} we have
\begin{align}
	C_0 \mixnorm{f^{T_i}}  &\leq \left( \sum_{\kv \in I_i} \l| f_\kv^{T_i} \r|^2\, k^{-2q} \right)^{\efrac{1}{2}} \left(  \sum_{\kv \in I_i}  \l| g_\kv \r|^2\, k^{2q} \right)^{\efrac{1}{2}} \\
	&\leq \mixnorm{f^{T_i}}  \left(  \sum_{\kv \in I_i}  \l| g_\kv \r|^2\, k^{2q} \right)^{\efrac{1}{2}}.
\end{align}
Therefore
\begin{equation}
	\sum_{\kv \in I_i}  \l| g_\kv \r|^2\, k^{2q} \geq C_0^2, \quad\forall i\,.
\end{equation}
This shows that the coefficients of $g$ are large on sets $I_i$ and we have
\begin{equation}
	\norm{g}_{\Hq}^2 = \sum_{i} \sum_{\kv \in I_i}  \l| g_\kv \r|^2\, k^{2q} \geq \sum_i C_0^2 = \infty.
\end{equation}
We conclude that $g$ is not in $\Hq$ --- a contradiction.
\end{proof}

\section{Proof of Theorem 2}\label{proofs2}

\maintheorem*
\begin{proof} [Proof of \cref{maintheoremLabel}]
If $f^t$ is \typeone, then we are done by \cref{case1}, so say $f^t$ is \typetwo.
Take some $\delta < 1/3$ and apply \cref{intervals} to construct sets $\l\{ I_i \r\}_{i=1}^\infty$ and a sequence $\l\{ T_i \r\}_{i = 1}^\infty$.
Let $\l\{ T_{i_{\ell}} \r\}_{\ell = 1}^\infty$ be a subsequence of $\l\{ T_i \r\}_{i = 1}^\infty$ satisfying
\begin{equation}\label{conditionthree}
	\sum\limits_{\ell > L} \ratio{i_\ell} \leq \delta^2\, \ratio{i_L}.
\end{equation}
and
\begin{equation}\label{eqn2}
	\sum\limits \ratio{i_\ell} \leq \delta.
\end{equation}
This can be done since $\rate(t) = o(\mixnorm{f^t})$. Let $g$ be the function with Fourier coefficients given by
\begin{equation}\label{thm:coeff}
g_\kv =
\left\{
\begin{array}{ll}
f_\kv^{T_{i_{\ell}}}  k^{-2q} \mixnorm{f^{T_{i_{\ell}}}}^{- 2} h(T_{i_\ell}) & \kv \in I_{i_{\ell}} \,; \\
0 & \text{otherwise}. \\
\end{array}
\right.
\end{equation}
\cref{eqn2} allows us to conclude that $g \in \Hq$:
\begin{align*}
\norm{g}_{\Hq}^2 &= \sum \l|g_\kv \r|^2\, k^{2 q}\\
&=\sum\limits_{i_{\ell}} \sum\limits_{\kv \in I_{i_{\ell}}} \l|f_\kv^{T_{i_{\ell}}} \r|^2\, k^{-4q} \mixnorm{f^{T_{i_{\ell}}}}^{ - 4} h^2(T_{i_{\ell}}) ~k^{2 q}\\
&= \sum\limits_{i_{\ell}} \mixnorm{f^{T_{i_{\ell}}}}^{ - 4} h^2(T_{i_{\ell}}) \l( \sum\limits_{\kv \in I_{i_{\ell}}} \l|f_\kv^{T_{i_{\ell}}} \r|^2\, k^{-2q} \r)   \\ \label{eqn:PartOne}
&\leq \sum\limits_{i_{\ell}} \mixnorm{f^{T_{i_{\ell}}}}^{ - 2} h^2(T_{i_{\ell}}).
\end{align*}

We will now finish the proof by showing $	\left| \pair{f^{T_{i_\ell}}}{g} \right| \geq (1-3\delta) ~h(T_{i_\ell})$.
We begin with some notation. Split the following sum into two parts:
\begin{align*}
	\pair{ f^{T_{i_{\ell}}} }{g} &= \sum\limits_{j_{\ell}} \sum\limits_{\kv \in I_{j_{\ell}}}  f^{T_{i_{\ell}}}_\kv ~ \overline{ f_\kv^{T_{j_{\ell}}}  } k^{-2q} \mixnorm{f^{T_{j_{\ell}}}}^{ - 2} \rate(T_{j_{\ell}})\\
	&= S^{T_{i_{\ell}}} + E^{T_{i_{\ell}}},
\end{align*}
where $S^{T_{i_{\ell}}}$ is the sum when $j_\ell = i_\ell$\,:
\begin{align*}
	S^{T_{i_{\ell}}} &= \sum\limits_{j_{\ell} = i_{\ell}} \sum\limits_{\kv \in I_{i_{\ell}}} f^{T_{i_{\ell}}}_\kv ~ \overline{ f_\kv^{T_{j_{\ell}}}  } k^{-2q} \mixnorm{f^{T_{j_{\ell}}}}^{ - 2} \rate(T_{j_{\ell}})\\
	&= \sum\limits_{\kv \in I_{i_{\ell}}}  \l|f^{T_{i_{\ell}}}_\kv \r|^2 ~ k^{-2q} ~ \mixnorm{f^{T_{i_{\ell}}}}^{ - 2} ~ \rate(T_{i_{\ell}}),
\end{align*}
and $E^{T_{i_{\ell}}}$ is the sum over $j_\ell \neq i_\ell$\,:
\begin{equation}
	E^{T_{i_{\ell}}} = 	\sum\limits_{j_{\ell} \ne i_{\ell}} \sum\limits_{\kv \in I_{j_{\ell}}}  f^{T_{i_{\ell}}}_\kv ~ \overline{ f_\kv^{T_{j_{\ell}}}  } k^{-2q} \mixnorm{f^{T_{j_{\ell}}}}^{ - 2} \rate(T_{j_{\ell}})\,.
        \label{eq:ETil}
\end{equation}
The idea is that $g_\kv$ is constructed to agree well with $f^{T_{i_{\ell}}}_\kv$  when $\kv \in I_{i_{\ell}}$. We will show that $S^{T_{i_{\ell}}}$ dominates the error $E^{T_{i_{\ell}}}$. Consider $j_\ell < i_\ell$ and $j_\ell > i_\ell$ separately in~\cref{eq:ETil}; taking absolute value, we have

\begin{equation}
	\l|E^{T_{i_{\ell}}} \r| \leq \sum\limits_{j_{\ell} \ne i_{\ell}} \sum\limits_{\kv \in I_{j_{\ell}}}  \l|f^{T_{i_{\ell}}}_\kv \r |\l|f^{T_{j_{\ell}}}_\kv \r| ~ k^{-2q} ~ \mixnorm{f^{T_{j_{\ell}}}}^{ - 2} ~ \rate(T_{j_{\ell}})
\end{equation}
and let $E^{T_{i_{\ell}}}_1$ be the sum over $j_\ell < i_\ell$\,:
\begin{equation}
	E^{T_{i_{\ell}}}_1 := \sum\limits_{j_{\ell} < i_{\ell}} \sum\limits_{\kv \in I_{j_{\ell}}} \l|f^{T_{i_{\ell}}}_\kv \r |\l|f^{T_{j_{\ell}}}_\kv \r| ~ k^{-2q} ~ \mixnorm{f^{T_{j_{\ell}}}}^{ - 2} ~ \rate(T_{j_{\ell}}) \,.
\end{equation}
Similarly define $E^{T_{i_{\ell}}}_2$ to be the sum over $j_\ell > 	i_\ell$\,:
\begin{equation}
	E^{T_{i_{\ell}}}_2 := \sum\limits_{j_{\ell} > i_{\ell}} \sum\limits_{\kv \in I_{j_{\ell}}} \l|f^{T_{i_{\ell}}}_\kv \r |\l|f^{T_{j_{\ell}}}_\kv \r| ~ k^{-2q} ~ \mixnorm{f^{T_{j_{\ell}}}}^{ - 2} ~ \rate(T_{j_{\ell}}) \,.
\end{equation}
We now bound $E^{T_{i_{\ell}}}_1$ using the Cauchy--Schwarz inequality:
\begin{align*}
	E^{T_{i_{\ell}}}_1 &\leq \left( \sum\limits_{j_{\ell} < i_{\ell}} \sum\limits_{\kv \in I_{j_{\ell}}}   \left|f_\kv^{ T_{i_{\ell}} } \right|^2  k^{-2q}    \right)^{\efrac{1}{2}}    \left( \sum\limits_{j_{\ell} < i_{\ell}} \sum\limits_{\kv \in I_{j_{\ell}}}  \left|f_\kv^{T_{j_{\ell}}} \right|^2  k^{-2q} \mixnorm{f^{T_{j_{\ell}}}}^{ - 4} ~ \rate^2(T_{j_{\ell}}) \right)^{\efrac{1}{2}} \\
	&\leq \left( \sum\limits_{j_{\ell} < i_{\ell}} \sum\limits_{\kv \in I_{j_{\ell}}}   \left|f_\kv^{ T_{i_{\ell}} } \right|^2  k^{-2q}    \right)^{\efrac{1}{2}}    \left( \sum\limits_{j_{\ell} < i_{\ell}}  \mixnorm{f^{T_{j_{\ell}}}}^{ - 2} ~ \rate^2(T_{j_{\ell}}) \right)^{\efrac{1}{2}} .
\end{align*}
We use \cref{intervals:property:FourierMassDoesNotReturn} from \cref{intervals} to bound the first factor and \cref{eqn2} to bound the second factor:
\begin{equation}
	E^{T_{i_{\ell}}}_1 \leq \left( \delta  \rate^2(T_{i_{\ell}} )  \right)^{\efrac{1}{2}}    \left(\delta \right)^{\efrac{1}{2}}
	= \delta \,  h(T_{i_{\ell}}).
\end{equation}
We similarly bound $E^{T_{i_{\ell}}}_2$:
\begin{align*}
	E^{T_{i_{\ell}}}_2 &\leq \left( \sum\limits_{j_{\ell} > i_{\ell}} \sum\limits_{\kv \in I_{j_{\ell}}}   \left|f_\kv^{ T_{i_{\ell}} } \right|^2  k^{-2q}    \right)^{\efrac{1}{2}}    \left( \sum\limits_{j_{\ell} > i_{\ell}} \sum\limits_{\kv \in I_{j_{\ell}}}  \left|f_\kv^{T_{j_{\ell}}} \right|^2  k^{-2q} \mixnorm{f^{T_{j_{\ell}}}}^{ - 4} ~ \rate^2(T_{j_{\ell}}) \right)^{\efrac{1}{2}} \\
	&\leq \mixnorm{f^{T_{i_{\ell}}}}   \left( \sum\limits_{j_{\ell} > i_{\ell}}  \mixnorm{f^{T_{j_{\ell}}}}^{ - 2} ~ \rate^2(T_{j_{\ell}}) \right)^{\efrac{1}{2}} .
\end{align*}
Using \cref{conditionthree}, we find
\begin{equation}
  E^{T_{i_{\ell}}}_2 \leq \mixnorm{f^{T_{i_{\ell}}}} \l( \delta^2 \l( \frac{\rate(T_{i_{\ell}})}{\mixnorm{f^{T_{i_{\ell}}}}}	\r)^2	\r)^{1/2}
	= \delta ~  h(T_{i_{\ell}})
\end{equation}
and therefore
\begin{align*}
	\pair{ f^{T_{i_{\ell}}} }{g} &=   S^{T_{i_{\ell}}} +  E^{T_{i_{\ell}}}   \\
	&\geq  S^{T_{i_{\ell}}} -  E^{T_{i_{\ell}}}_1 -   E^{T_{i_{\ell}}}_2  \\
	&\geq S^{T_{i_{\ell}}} -  \delta  ~\rate(T_{i_\ell} )  -   \delta  ~\rate(T_{i_\ell} ).
\end{align*}
Again using \cref{intervals:property:capturesFourierMass} from \cref{intervals} that the set $I_{i_{\ell}}$ captures a large proportion of the Sobolev norm, we conclude
\begin{align*}
	\pair{ f^{T_{i_{\ell}}} }{g} &\geq ( 1 - \delta)  \mixnorm{f^{T_{i_\ell}}}^2  \mixnorm{f^{T_{i_{\ell}}}}^{ -2 } ~ \rate(T_{i_{\ell}})  -  2\delta  ~\rate(T_{i_\ell} ) \\
	&= \left( 1- 3\delta \right) ~\rate(T_{i_\ell} ).
\end{align*}
\end{proof}

\section*{Acknowledgments}

The authors thank Gautam Iyer for helpful discussions and Georg Gottwald for asking the question that prompted this research.
This work was supported in part by NSF Award DMS-1813003.

\setcitestyle{numbers}
\bibliographystyle{plainnat}
\bibliography{journals_abbrev,mixconv}

\end{document}